\theoremstyle{plain}
\theoremstyle{plain}
\newtheorem{theorem}{Theorem}[section]
\newtheorem{corollary}[theorem]{Corollary}
\newtheorem{lemma}[theorem]{Lemma}
\theoremstyle{definition}
\theoremstyle{remark}
\numberwithin{equation}{section}
\newtheorem{remark}[theorem]{Remark}
\numberwithin{theorem}{section}
\numberwithin{equation}{section}
\numberwithin{figure}{section}
\def\D{\mathcal D}
\def\AA{\mathbb A}
\def\A{\mathscr A}
\def\M{\mathcal M}
\def\S{\mathcal S}
\def\L{\mathscr L}
\def\H{\mathscr H}
\def\H{\mathcal H}
\def\F{\mathscr F}
\def\d{{\rm  d}}
\def\1{\mathbf 1}
\newcommand{\Crm}{\mathrm{C}}
\newcommand{\Lrm}{\mathrm{L}}
\newcommand{\Urm}{\mathrm{U}}
\newcommand{\Dcal}{\mathcal{D}}
\newcommand{\Ecal}{\mathcal{E}}
\DeclareMathOperator*{\wslim}{w*-lim}
\DeclareMathOperator{\diverg}{div}
\DeclareMathOperator{\curl}{curl}
\newcommand{\ee}{\mathrm{e}}
\newcommand{\ii}{\mathrm{i}}
\newcommand{\setb}[2]{\bigl\{\, #1 \ \ \textup{\textbf{:}}\ \ #2 \,\bigr\}}
\newcommand{\setBB}[2]{\biggl\{\, #1 \ \ \textup{\textbf{:}}\ \ #2 \,\biggr\}}
\newcommand{\abs}[1]{|#1|}
\newcommand{\dprb}[1]{\bigl\langle #1 \bigr\rangle}
\newcommand{\di}{\mathrm{d}}
\newcommand{\dd}{\;\mathrm{d}}
\newcommand{\N}{\mathbb{N}}
\newcommand{\R}{\mathbb{R}}
\newcommand{\loc}{\mathrm{loc}}
\newcommand{\eps}{\epsilon}
\newcommand{\BV}{\mathrm{BV}}
\newcommand{\BD}{\mathrm{BD}}
\renewcommand{\epsilon}{\varepsilon}
\renewcommand{\hat}{\widehat}
\newcommand{\weakto} {\rightharpoonup}                 
\newcommand{\weakstarto}{\stackrel {*} {\weakto}}      
\newcommand{\aac}{\ll}
\newcommand{\defeq} {:=}
\newcommand{\mean}[1]{\,-\hskip-1.08em\int_{#1}} 
\newcommand{\res}{\mathop{\hbox{\vrule height 7pt width .5pt depth 0pt
\vrule height .5pt width 6pt depth 0pt}}\nolimits} 
\DeclareMathOperator{\imult}{\res} 
\DeclareMathOperator{\Ker}{ker}
\DeclareMathOperator{\Div}{div}
\DeclareMathOperator{\Span}{span}
\title[On the structure of ${\mathscr A}$-free measures]{On the structure of ${\mathscr A}$-free measures and applications}
\author[G.~De Philippis]{Guido De Philippis}
\address{\textit{Guido De Philippis:} Unit\'e de Math\'ematiques Pures et Appliqu\'ees -- ENS de Lyon, UMR 5669 -- UMPA, France.}
\email{guido.de-philippis@ens-lyon.fr}
\author[F.~Rindler]{Filip Rindler}
\address{\textit{Filip Rindler:} Mathematics Institute, University of Warwick, Coventry CV4 7AL, United Kingdom.}
\email{F.Rindler@warwick.ac.uk}
\keywords{}
\begin{document}

\begin{abstract}
We establish a general structure theorem for the singular part of ${\mathscr A}$-free Radon measures, where ${\mathscr A}$ is a linear PDE operator. By applying the theorem to suitably chosen differential operators ${\mathscr A}$, we obtain a simple proof of Alberti's rank-one theorem and, for the first time,  its extensions to functions of bounded deformation (BD). We also prove a structure theorem for the singular part of a finite family of normal currents.  The latter result implies that the Rademacher theorem on the differentiability of Lipschitz functions can hold only for absolutely continuous measures and that every top-dimensional Ambrosio--Kirchheim metric current in $\mathbb R^d$ is a Federer--Fleming flat chain.

\vspace{4pt}

\noindent\textsc{MSC (2010):} 35D30 (primary); 28B05, 42B37 (secondary).

\vspace{4pt}

\noindent\textsc{Keywords:} Structure of measures, Alberti rank-one theorem, functions of bounded deformation, Rademacher theorem.
\vspace{4pt}

\noindent\textsc{Date:} \today{}.
\end{abstract}

\maketitle

\thispagestyle{empty}

\section{Introduction}

Consider a finite Radon measure $\mu$ on an open set $\Omega \subset \R^d$ with values in $\R^m$ that is $\A$-free for a $k$'th-order linear constant-coefficient PDE operator $\A$ ($k \in \N$), i.e.\
\begin{equation} \label{eq:muPDE}
  \A \mu := \sum_{|\alpha|\le k}A_{\alpha} \partial^\alpha \mu = 0  \qquad \textrm{in \(\D'(\Omega;\R^n)\).}
\end{equation}
Here, $A_{\alpha}\in \R^{n\times m}$ and $\partial^\alpha=\partial_1^{\alpha_1}\ldots\partial_d^{\alpha_d}$ for each multi-index $\alpha=(\alpha_1,\ldots,\alpha_d)\in (\N \cup \{0\})^d$.    A central question about~\eqref{eq:muPDE} asks what can be said about the \emph{singular part} $\mu^s$ of solutions $\mu = g \L^d + \mu^s$ ($\mu^s \perp \L^d$). Besides Alberti's celebrated rank-one theorem~\cite{Alberti93} for $\A = \curl$, not much is known at present.

In this respect we recall that the \emph{wave cone}
\[
  \Lambda_\A\defeq\bigcup_{|\xi|=1} \Ker \AA^k(\xi) \subset \R^m \qquad\textrm{with}\qquad \AA^k(\xi)\defeq (2\pi \ii)^{k} \sum_{|\alpha|=k}A_{\alpha}\xi^{\alpha},
\]
where $\xi^\alpha = \xi_1^{\alpha_1} \cdots \xi_d^{\alpha_d}$, plays a crucial role in the compensated compactness theory for sequences of $\A$-free maps~\cite{Murat78, Murat79, Tartar79, Tartar83, DiPerna85, Rindler14MCF}. Indeed, \(\Lambda_\A\)  contains the values that an oscillating or concentrating sequence of functions is expected to attain. The corresponding \emph{characteristic} $\xi$'s determine the allowed directions of oscillations and concentrations.

Since the singular part $\mu^s$ of a measure contains ``condensed'' oscillations and concentrations, it is natural to conjecture that for a measure $\mu$ solving~\eqref{eq:muPDE}, the \emph{polar} $\frac{\d\mu}{\d|\mu|}$, i.e.\ the Radon--Nikod\'{y}m derivative of $\mu$ with respect to its total variation measure $|\mu|$, must lie in the wave cone at almost all singular points. For $\A = \curl$ this was conjectured by Ambrosio \& De Giorgi in~\cite{AmbrosioDeGiorgi88} and proved by Alberti in~\cite{Alberti93}. Our main result asserts the truth of this conjecture in full generality:

\begin{theorem}\label{main}
Let \(\Omega\subset \R^d\) be an open set, let \(\A\) be a $k$'th-order linear constant-coefficient differential operator as above, and let \(\mu \in \M(\Omega;\R^m)\) be an \(\A\)-free Radon measure on $\Omega$ with values in $\R^m$. Then,
\[
\frac{\d\mu}{\d|\mu|}(x)\in \Lambda_\A \qquad\textrm{for \(|\mu|^s\)-a.e.\ \(x\in \Omega\).}
\]
\end{theorem}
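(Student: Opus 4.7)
The plan is a blow-up argument at singular points, combined with an ellipticity-type rigidity statement for $\A^k$-free measures with constant polar. Fix a point $x_0$ drawn from a $|\mu|^s$-conull set where (i) the polar $P_0 := \frac{\d\mu}{\d|\mu|}(x_0)$ is $|\mu|$-approximately continuous, and (ii) the upper $d$-density $\Theta^{*d}(|\mu|,x_0)=+\infty$; both hold $|\mu|^s$-a.e.\ by the standard differentiation theory of Radon measures. Arguing by contradiction, I assume $P_0\notin\Lambda_\A$ on a subset of positive $|\mu|^s$-mass and aim for a contradiction.

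Consider the rescalings $\nu_r := (T_{x_0,r})_\#\mu/c_r$ with $T_{x_0,r}(y)=(y-x_0)/r$ and $c_r$ a suitable normalizing constant. Along a sequence $r_j\to 0$ on which $|\mu|(B_{r_j}(x_0))/r_j^d\to\infty$ (available by (ii)), one can choose $c_j$ so that $\nu_j$ converges weak-$*$ to a non-trivial tangent measure $\nu\in\mathrm{Tan}(\mu,x_0)$, and moreover arrange (using Preiss--Mattila tangent-measure theory for singular measures) that $\sigma:=|\nu|$ is singular with respect to $\L^d$. The rescaling intertwines $\A=\A^k+(\text{lower order})$ so that lower-order derivatives pick up positive powers of $r$; only $\A^k$ survives in the limit, and hence $\A^k\nu=0$. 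The approximate continuity of the polar at $x_0$ yields $\nu=P_0\sigma$ with the \emph{constant} vector $P_0$.

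The crux is the following rigidity lemma: \emph{if $P_0\notin\Lambda_\A$ and $P_0\sigma$ is $\A^k$-free for some $\sigma\in\M_+(\R^d)$, then $\sigma\ll\L^d$} (in fact $\sigma$ is a non-negative polynomial times $\L^d$). The idea is ellipticity in the $P_0$-direction: $P_0\notin\Lambda_\A$ means $\A^k(\xi)P_0\neq 0$ for all $\xi\neq 0$, and by homogeneity $|\A^k(\xi)P_0|\geq c|\xi|^k$ for some $c>0$. Taking the Fourier transform of $\A^k(P_0\sigma)=0$ after localizing $\sigma$ with a smooth compactly-supported cutoff gives $\A^k(\xi)P_0\,\hat\sigma(\xi)=0$ modulo controlled commutator terms. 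The coercivity of the symbol on the line spanned by $P_0$ then forces $\hat\sigma$ to be supported in $\{0\}$, so $\sigma$ is a polynomial measure; combined with $\sigma\geq 0$ this gives absolute continuity. Applied in our setting, $\sigma$ would be both singular and absolutely continuous with respect to $\L^d$, hence $\sigma\equiv 0$, contradicting non-triviality. This forces $P_0\in\Lambda_\A$ at $|\mu|^s$-a.e.\ point, as claimed.

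The principal obstacle I anticipate is the rigidity lemma, specifically making the Fourier argument rigorous for Radon measures $\sigma$ that are only locally finite: the cutoff procedure must be chosen carefully so that commutators between $\A^k$ and the cutoff do not destroy the conclusion, and one must pass from local information back to global absolute continuity (most naturally by iterating the argument on further tangent measures of the measure so obtained, and exploiting scale-invariance of the property $P_0\notin\Lambda_\A$). The Preiss--Mattila bookkeeping — guaranteeing a non-trivial singular tangent measure at $|\mu|^s$-a.e.\ point — is the other delicate ingredient; the remaining blow-up steps, in particular the identification $\nu=P_0\sigma$ via approximate continuity, are more standard.
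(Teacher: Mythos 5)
Your blow-up step and your rigidity lemma are both sound in isolation, but the hinge of your contradiction fails: you cannot ``arrange (using Preiss--Mattila tangent-measure theory for singular measures) that $\sigma=|\nu|$ is singular with respect to $\L^d$''. Tangent measures of a purely singular measure need not be singular: Preiss constructed a purely singular measure whose tangent measures at almost every point are only multiples of Lebesgue measure (and O'Neil a measure having \emph{every} measure as a tangent at a.e.\ point), so the theory only furnishes some nonzero $\nu\in\mathrm{Tan}(|\mu|^s,x_0)$ with no control whatsoever on its Lebesgue decomposition. Your rigidity lemma is correct and is in fact the easy half of the argument (from $\A^k(P_0\sigma)=0$ and $|\AA^k(\xi)P_0|\ge c|\xi|^k$ one gets $\supp\hat\nu\subset\{0\}$, hence $\nu\ll\L^d$; no delicate cutoff bookkeeping is really the issue), but its conclusion contradicts nothing: you would merely have shown that all tangent measures at $x_0$ are absolutely continuous, which is a perfectly possible behavior for a singular measure. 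Iterating on further tangent measures, as you suggest, does not repair this, since absolute continuity of tangents never propagates back to the original measure.

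The missing idea --- and the actual core of the paper's proof --- is to upgrade the convergence of the rescalings, not to analyze the limit alone. The paper keeps the genuinely singular measures $\nu_j=(T^{x_0,r_j})_\sharp|\mu|^s/|\mu|^s(B_{r_j}(x_0))$ and, exploiting the coercivity $|\AA^k(\xi)P_0|\ge c|\xi|^k$, writes the mollified, localized rescalings as a H\"ormander--Mihlin multiplier applied to an $\Lrm^1$-vanishing error (coming from the approximate continuity of the polar and the smallness of the absolutely continuous part) plus operators that are compact from $\Lrm^1_c(B_1)$ to $\Lrm^1_\loc(\R^d)$ (negative-order Bessel potentials absorbing the cutoff commutators and lower-order terms); combined with the positivity of the rescaled measures and a Vitali-type lemma, this yields not only $\nu\ll\L^d$ but the total-variation convergence $|\nu_j-\nu|(B_{1/2})\to 0$, in the spirit of Allard's strong constancy lemma. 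Only then does singularity bite: taking Lebesgue-null sets $E_j$ carrying $\nu_j$ one gets $\nu_j(B_{1/2})\le|\nu_j-\nu|(B_{1/2})+\nu(E_j)\to 0$, contradicting $\nu\res B_{1/2}\neq 0$. Without this strong-convergence step (or a substitute of comparable strength), the blow-up scheme as you have structured it cannot close.
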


\begin{remark}
Note that (perhaps surprisingly) we do not need to require $\A$ to satisfy Murat's constant-rank condition~\cite{Murat81}.
\end{remark}

\begin{remark}\label{rmk:rhs}
Let us point out that  Theorem~\ref{main} is also valid in  the situation
\begin{equation}\label{eq:rhs}
\A\mu =\sigma \quad\text{for some}\quad \sigma\in \M(\Omega;\R^n).
\end{equation}
This can be reduced to the setting of Theorem~\ref{main} by defining  \(\tilde \mu=(\mu,\sigma)\in \M(\R^d;\R^{m+n})\) and  \(\tilde \A\) (with an additional $0$'th-order term) such that \eqref{eq:rhs} is equivalent to \(\tilde {\A} \tilde \mu=0\). It is easy to check that, if \(k\ge1\), \(\Lambda_{\tilde{\A}} = \Lambda_\A \times \R^n\) and that for \(|\mu|\)-almost every point \(\frac{\d \mu} {\d |{\mu}|}\) is proportional to \(\frac{\d \mu} {\d |\tilde{\mu}|}\).
\end{remark}

\begin{remark}\label{rmk:constant}
Using essentially the same proof, Theorem~\ref{main} can be further extended to the setting of \emph{variable-coefficient} linear differential operators \(\A=\sum_\alpha A_\alpha(x)\partial^\alpha\) with the coefficients satisfying suitable regularity assumptions.  In this setting, the conclusion reads
\[
\frac{\d\mu}{\d|\mu|}(x)\in \Lambda_\A(x):=\bigcup_{|\xi|=1} \Ker \AA_{x}^k(\xi) \qquad\textrm{for \(|\mu|^s\)-a.e.\ \(x\),}
\]
where
\[
\AA_{x}^k(\xi)\defeq\sum_{|\alpha|=k}(2\pi \ii)^{k} A_{\alpha}(x)\xi^{\alpha}.
\]
Similar statements can be obtained if \(\mu\) solves some pseudo-differential equations.
\end{remark}

By applying  Theorem~\ref{main} to suitably chosen differential operators, we easily obtain several remarkable consequences, which are outlined below.
 In particular, we provide a simple proof of Alberti's rank-one theorem and, for the first time,  its extensions to functions of bounded deformation (BD).  We also prove  a structure theorem for  the singular part of a  finite family of normal currents in the spirit of the rank-one theorem. By relying on the results of Alberti \& Marchese~\cite{AlbertiMarchese15} and of Schioppa~\cite{Schioppa15},  the latter result immediately implies that  the Rademacher theorem can hold only for absolutely continuous measures and that every top-dimensional Ambrosio--Kirchheim metric current in \(\R^d\) is a Federer--Fleming flat chain (a part of the so-called ``flat chain conjecture'', see~\cite[Section~11]{AmbrosioKirchheim00}).

\subsection{Rank-one property of BV-derivatives}

As already mentioned above, in~\cite{Alberti93} Alberti solved a conjecture of Ambrosio \& De Giorgi~\cite{AmbrosioDeGiorgi88} by showing the  rank-one property for the singular part of the gradients of  \(\BV\)-functions (also see~\cite{DeLellis08,AlbertiCsornyeiPreiss05}). Besides its theoretical interest, the rank-one theorem has many applications in the theory of functions of bounded variation, we just mention the following: lower-semicontinuity and relaxation~\cite{AmbrosioDalMaso92,FonsecaMuller93,KristensenRindler10Relax}, integral representation theorems~\cite{BouchitteeFonsecaMascarenhas98}, Young measure theory~\cite{KristensenRindler10YM, Rindler14YM, KirchheimKristensen16}, approximation theory~\cite{KristensenRindler15}, and the study of continuity equations with BV-vector fields~\cite{Ambrosio04} (in the latter case  the use of the rank-one theorems can however be avoided, see~\cite[Remark 3.7]{Ambrosio04} and~\cite{Ambrosio08}). We refer to~\cite[Chapter~5]{AmbrosioFuscoPallara00} for further history.

\begin{theorem}[Alberti's rank-one theorem]\label{corBV}
Let \(\Omega\subset \R^d\) be an open set and let  \(u\in \BV(\Omega; \R^\ell)\). Then, for \(|D^s u|\)-almost every \(x\in \Omega\), there exist $a(x)\in \R^\ell \setminus \{0\}$, $b(x)\in \R^d \setminus \{0\}$ such that
\[
\frac{\d D^s u}{\d|D^s u|}(x)=a(x)\otimes b(x).
\]
\end{theorem}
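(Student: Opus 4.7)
The plan is to apply Theorem~\ref{main} directly to the measure $\mu := Du \in \M(\Omega;\R^{\ell\times d})$. Since $u \in \BV(\Omega;\R^\ell)$, the components $\mu_{ij} = \partial_j u^i$ automatically satisfy the integrability conditions
\[
  \partial_k \mu_{ij} - \partial_j \mu_{ik} = 0 \qquad\text{in $\D'(\Omega)$,}\quad 1\le i\le \ell,\ 1\le j<k\le d,
\]
so $\mu$ is $\A$-free for the first-order constant-coefficient operator $\A$ whose components are indexed by $(i,j,k)$ and act as $(\A\mu)_{ijk} = \partial_k\mu_{ij} - \partial_j \mu_{ik}$.

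Next I would identify the wave cone. The principal symbol of $\A$ at $\xi \in \R^d$ acts on $M \in \R^{\ell\times d}$ by
\[
  \bigl(\AA^1(\xi)M\bigr)_{ijk} = (2\pi\ii)\bigl(\xi_k M_{ij} - \xi_j M_{ik}\bigr).
\]
Thus $M \in \Ker \AA^1(\xi)$ with $|\xi|=1$ precisely when, for every fixed row index $i$, the vector $(M_{i,1},\dots,M_{i,d})$ is parallel to $\xi$. Setting $a_i$ equal to the corresponding proportionality constants gives $M = a \otimes \xi$ for some $a \in \R^\ell$, and conversely every such tensor lies in the kernel. Hence
\[
  \Lambda_\A = \bigl\{\, a\otimes b \ :\ a\in\R^\ell,\ b \in \R^d\,\bigr\},
\]
which is exactly the set of rank-one $\ell\times d$ matrices.

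Applying Theorem~\ref{main} to $\mu = Du$ yields $\frac{\d Du}{\d|Du|}(x) \in \Lambda_\A$ for $|Du|^s$-a.e.\ $x \in \Omega$. Writing the Lebesgue decomposition $Du = \nabla u\, \L^d + D^s u$, the singular part $|D^s u|$ is a restriction of $|Du|^s$ to a $\L^d$-null Borel set $N$, and on $N$ the polars agree: $\frac{\d D^s u}{\d|D^s u|}(x) = \frac{\d Du}{\d|Du|}(x)$. Therefore the polar of $D^s u$ is rank-one $|D^s u|$-a.e., which is Alberti's theorem.

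There is essentially no obstacle beyond Theorem~\ref{main}: the entire content of this corollary is the (purely linear-algebraic) calculation of $\Lambda_\A$, together with the standard bookkeeping between $|Du|^s$ and $|D^s u|$. All the analytic difficulty has been absorbed into the general structure result for $\A$-free measures.
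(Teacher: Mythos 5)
Your proposal is correct and follows essentially the same route as the paper: the paper also takes $\mu = Du$, imposes the curl-free constraint $\partial_j \mu^k_i - \partial_i \mu^k_j = 0$, identifies $\Lambda_\A$ with the rank-one matrices (citing~\cite[Remark~3.5(iii)]{FonsecaMuller99} where you carry out the short computation yourself), and concludes from Theorem~\ref{main}. The only additions on your side are the explicit kernel calculation and the (trivial) bookkeeping between $|Du|^s$ and $|D^s u|$, both of which are fine.
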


Alberti's rank-one theorem easily follows by  choosing  \(\A=\curl\) in Theorem~\ref{main}. Let us also mention that Massaccesi \& Vittone have recently given a short and elegant proof of the rank-one property based on the theory of sets of finite perimeter~\cite{MassaccesiVittone16}.

As already observed by Alberti in~\cite[Theorem 4.13]{Alberti93}, Theorem~\ref{corBV} implies the validity of a similar property for higher-order derivatives. A direct proof of this fact can also be obtained as a corollary of   our Theorem~\ref{main}:

\begin{theorem}[Rank-one theorem for higher-order derivatives]\label{corHigher}
Let $\Omega\subset \R^d$ be an open set and let $u \in \Lrm^1(\Omega;\R^\ell)$ with $ D^r u \in \M(\Omega;\mathrm{SLin}^r(\R^d;\R^\ell))$ for some $r \in \N$, where $\mathrm{SLin}^r(\R^d;\R^\ell)$ contains all symmetric $r$-linear maps from $\R^d$ to $\R^\ell$. Then, for $|(D^r u)^s|$-almost every $x\in \Omega$, there exist $a(x)\in \R^\ell \setminus \{0\}$, $b(x)\in \R^d \setminus \{0\}$ such that
\[
\frac{\d (D^r u)^s}{\d|(D^r u)^s|}(x) = a(x)\otimes \underbrace{b(x) \otimes \cdots \otimes b(x)}_{\textrm{\(r\) times}}.
\]
\end{theorem}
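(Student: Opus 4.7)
My plan is to apply Theorem~\ref{main} to the measure $\mu := D^r u$ with a well-chosen first-order constant-coefficient operator $\A$, in direct analogy with how Theorem~\ref{corBV} follows from the case $\A = \curl$. Viewing an $E$-valued measure, with $E := \mathrm{SLin}^r(\R^d;\R^\ell)$, componentwise as a family $(\mu_{\alpha; i_1,\ldots,i_r})$ symmetric in $i_1,\ldots,i_r$, I would define $\A$ to be the first-order operator whose components read
\[
(\A \mu)_{\alpha; j,i; i_2,\ldots,i_r} := \partial_j \mu_{\alpha; i, i_2,\ldots,i_r} - \partial_i \mu_{\alpha; j, i_2,\ldots,i_r},
\]
indexed over $\alpha \in \{1,\ldots,\ell\}$, $1 \le j < i \le d$, and $(i_2,\ldots,i_r) \in \{1,\ldots,d\}^{r-1}$. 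Since partial derivatives commute in $\D'$, the choice $\mu = D^r u$ makes $\A \mu = 0$ automatic.

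The next step is to compute the wave cone. The principal symbol acts by
\[
(\AA^1(\xi) T)_{\alpha;j,i;i_2,\ldots,i_r} = 2\pi\ii\bigl(\xi_j T_{\alpha;i,i_2,\ldots,i_r} - \xi_i T_{\alpha;j,i_2,\ldots,i_r}\bigr),
\]
so membership in $\Ker \AA^1(\xi)$ forces, for each fixed $\alpha$ and $(i_2,\ldots,i_r)$, the vector $(T_{\alpha;i,i_2,\ldots,i_r})_{i=1}^d$ to be parallel to $\xi$. Exploiting the full symmetry of $T$ and iterating on the remaining $r-1$ slots, one concludes $T_{\alpha;i_1,\ldots,i_r} = a_\alpha\, \xi_{i_1}\cdots\xi_{i_r}$ for some $a \in \R^\ell$. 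Hence
\[
\Lambda_\A = \bigl\{\, a \otimes \xi^{\otimes r} \,:\, a \in \R^\ell,\ \xi \in \R^d,\ |\xi|=1 \,\bigr\}.
\]

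With these ingredients in place, Theorem~\ref{main} immediately yields $\frac{\d \mu}{\d|\mu|}(x) \in \Lambda_\A$ for $|\mu|^s$-a.e.\ $x$; on the singular support the polar of $\mu$ coincides with that of $(D^r u)^s$, giving the desired representation $a(x) \otimes b(x)^{\otimes r}$, with $a(x)$ and $b(x)$ both nonzero because the polar has unit length. I expect the only real obstacle to be the wave-cone computation above: it is the symmetric-tensor analogue of the standard identity ``$\xi_j v_i = \xi_i v_j \Rightarrow v \parallel \xi$'' underlying the $\A = \curl$ derivation of Theorem~\ref{corBV}, and reduces to short linear algebra. Once established, Theorem~\ref{main} does all of the analytic work.
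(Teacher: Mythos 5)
Your proposal is correct and follows essentially the paper's own route: apply Theorem~\ref{main} to $\mu = D^r u$ with a first-order, constant-coefficient curl-type annihilator and identify the wave cone with the symmetric rank-one tensors $a\otimes \xi \otimes \cdots \otimes \xi$. The only (harmless) difference is that you impose the curl constraint in a single slot and recover the full factorization from the symmetry of $\mathrm{SLin}^r(\R^d;\R^\ell)$ by iterating the ``$\xi_j v_i = \xi_i v_j \Rightarrow v \parallel \xi$'' argument, whereas the paper's operator imposes it in every slot and cites~\cite[Example~3.10(d)]{FonsecaMuller99} for the wave-cone computation.
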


\subsection{Polar density theorem for BD-functions}

The proofs in~\cite{Alberti93} and in~\cite{MassaccesiVittone16} of Alberti's rank-one theorem strongly rely on the structure of functions of bounded variation and on their link with the theory of sets of finite perimeter. In particular, so far it has remained open whether a similar statement is valid for the larger class of functions of {\em bounded deformation}, i.e.\ those functions \(u\in \Lrm^1(\Omega;\R^d)\)  whose symmetric part of the (distributional) derivative is  a measure,
\[
Eu\defeq \frac{Du+(Du)^T}{2}\in \M(\Omega;\R^{d \times d}_{\mathrm{sym}}).
\]
We collect all these functions into the set $\BD(\Omega)$; see~\cite{TemamStrang80,Temam85book,AmbrosioCosciaDalMaso97} for a detailed account of the theory of this space. 

The extension of Alberti's rank-one theorem to the space of functions of bounded deformation  follows from our main Theorem~\ref{main}  with the appropriate choice of  the differential operator \(\A\):

\begin{theorem}\label{corBD}
Let \(\Omega\subset \R^d\) be an open set and let \(u\in \BD(\Omega)\). Then, for \(|E^s u|\)-almost every \(x\in \Omega\), there exist $a(x), b(x)\in \R^d \setminus \{0\}$ such that
\[
\frac{\d E^s u}{\d| E^s u|}(x)=a(x)\odot b(x),
\]
where we define the symmetrized tensor product as  \(a\odot b:=(a\otimes b+b\otimes a)/2\) for \(a,b\in \R^d\).
\end{theorem}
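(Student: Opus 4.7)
The plan is to apply Theorem~\ref{main} to the measure $\mu := Eu \in \M(\Omega;\R^{d\times d}_{\mathrm{sym}})$ for the second-order, constant-coefficient, linear differential operator $\A$ whose wave cone matches exactly the set of symmetrized rank-one tensors. The natural candidate is the \emph{Saint-Venant compatibility operator}
\[
(\A w)_{ijkl} \defeq \partial_i\partial_j w_{kl} + \partial_k\partial_l w_{ij} - \partial_i\partial_l w_{jk} - \partial_j\partial_k w_{il},
\qquad i,j,k,l=1,\dots,d,
\]
acting on symmetric matrix-valued distributions and producing a distribution valued in a space of fourth-order tensors. (Any redundancies in the indices are harmless, since the conclusion depends only on $\Ker \AA^2(\xi)$.)

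First I would verify that $\A(Eu) = 0$ in $\D'(\Omega)$. Substituting $w_{ij} = \tfrac12(\partial_i u_j + \partial_j u_i)$ turns each of the four second-derivative terms into third derivatives of the components of $u$; a routine bookkeeping, using only commutativity of partial derivatives in the distributional sense, shows that the eight resulting third-order terms cancel in pairs (the $u_l$-piece, $u_k$-piece, $u_j$-piece, and $u_i$-piece each cancel separately). Hence $Eu$ is $\A$-free, and Theorem~\ref{main} (or its variant noted in Remark~\ref{rmk:rhs} if one wanted an inhomogeneous version) applies.

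The heart of the argument — and what I expect to be the main (though still elementary) step — is the identification of the wave cone. The principal symbol is, up to the constant $-(2\pi)^2$,
\[
(\AA^2(\xi) w)_{ijkl} = \xi_i\xi_j w_{kl} + \xi_k\xi_l w_{ij} - \xi_i\xi_l w_{jk} - \xi_j\xi_k w_{il}.
\]
A direct expansion shows $a\odot\xi \in \Ker\AA^2(\xi)$ for every $a\in\R^d$. For the converse inclusion, by rotational invariance I may take $\xi = e_1$; setting $i=j=1$ in $\AA^2(e_1)w = 0$ yields $w_{kl} = 0$ whenever $k,l \ne 1$, so $w$ is supported on the ``first row and column'' and is therefore of the form $a\odot e_1$ with $a_1 = w_{11}$ and $a_j = 2 w_{1j}$ for $j\ne1$. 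Thus
\[
\Lambda_\A = \setb{\,a\odot b\,}{\,a,b \in \R^d\,}.
\]

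Applying Theorem~\ref{main} then gives $\frac{\d Eu}{\d |Eu|}(x) \in \Lambda_\A$ for $|Eu|^s$-a.e.\ $x$, and on the singular set this polar coincides with $\frac{\d E^s u}{\d |E^s u|}(x)$. Since this polar has unit norm, the vectors $a(x)$ and $b(x)$ in the representation $a(x)\odot b(x)$ are both nonzero, completing the proof. No essential obstacle is expected beyond the kernel computation above; all the heavy lifting is done inside Theorem~\ref{main}, which is crucially applicable here without any constant-rank assumption (the Saint-Venant operator is not of constant rank in general, which is precisely why this extension to $\BD$ was previously out of reach).
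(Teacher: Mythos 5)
Your proof is correct and takes essentially the same route as the paper: apply Theorem~\ref{main} to $\mu=Eu$ with a second-order constant-coefficient annihilator of symmetrized gradients (the Saint-Venant compatibility conditions) and identify $\Lambda_\A$ as the cone of symmetrized rank-one matrices; the paper simply uses the contracted $d\times d$ form of these conditions and cites~\cite{FonsecaMuller99} for the kernel computation, whereas you use the full fourth-order tensor form and verify the kernel directly, which is an immaterial difference. (Only your closing aside is inaccurate: since $\Ker\AA^2(\xi)=\setb{a\odot\xi}{a\in\R^d}$ has dimension $d$ for every $\xi\neq0$, this operator \emph{does} satisfy Murat's constant-rank condition; the reason BD was previously out of reach is that earlier proofs of the rank-one theorem relied on BV-specific structure such as sets of finite perimeter, not on constant rank.)
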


This theorem has consequences for the structure theory of BD-functions and lower semicontinuity theory (in the lower semicontinuity theory our structure theorem can, however, be avoided at the price of some mild restrictions on the functional, see~\cite{Rindler11} for BD and~\cite{Rindler12} for an analogous result in BV); some of these consequences will be explored in future work.

Further, in~\cite{Temam85book,FuchsSeregin00book,ContiOrtiz05} it is motivated why the space
\[
  \Urm(\Omega) := \setb{ u \in \BD(\Omega) }{ \diverg u \in \Lrm^2(\Omega) }
\]
is the appropriate space for elasto-plasticity theory in the geometrically linear setting. For this space we immediately get the following structure result:

\begin{corollary} \label{corU}
Let $\Omega\subset \R^d$ be an open set and let $u\in \Urm(\Omega)$. Then, for $\abs{E^s u}$-almost every $x \in \Omega$, there exist $a(x), b(x) \in \R^d \setminus \{0\}$ with
\[
  a(x) \perp b(x)
\]
such that
\[
 \frac{\di E^s u}{\di \abs{E^s u}}(x) = a(x) \odot b(x).
\]
\end{corollary}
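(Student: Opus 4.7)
The plan is to combine Theorem~\ref{corBD} with the scalar constraint provided by the hypothesis $\diverg u \in \Lrm^2(\Omega)$. Since $\Urm(\Omega) \subset \BD(\Omega)$, Theorem~\ref{corBD} immediately yields, for $|E^s u|$-almost every $x \in \Omega$, the symmetrized rank-one representation
\[
\frac{\d E^s u}{\d |E^s u|}(x) = a(x) \odot b(x),
\]
with $a(x), b(x) \in \R^d \setminus \{0\}$. It therefore remains only to upgrade this to $a(x)\cdot b(x) = 0$.

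To this end, I would use that $\diverg u = \mathrm{tr}(Eu)$ in the sense of distributions on $\Omega$. Writing the Lebesgue decomposition $Eu = (Eu)^{a}\L^d + E^s u$ and applying $\mathrm{tr}$ gives
\[
\diverg u = \mathrm{tr}\bigl((Eu)^{a}\bigr) \L^d + \mathrm{tr}(E^s u).
\]
The measure $\mathrm{tr}(E^s u)$ is singular with respect to $\L^d$, since $\mathrm{tr}$ is a continuous linear functional on $\R^{d\times d}_{\mathrm{sym}}$ and therefore preserves the singularity of $E^s u$. On the other hand, the left-hand side $\diverg u \in \Lrm^2(\Omega) \subset \Lrm^1_{\loc}(\Omega)$ is absolutely continuous. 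Uniqueness of the Lebesgue decomposition then forces $\mathrm{tr}(E^s u) = 0$ as a measure on $\Omega$.

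Expressing this identity via Radon--Nikod\'ym as
\[
0 = \mathrm{tr}(E^s u) = \mathrm{tr}\!\left(\frac{\d E^s u}{\d |E^s u|}\right) |E^s u|,
\]
I conclude that $\mathrm{tr}\!\left(\frac{\d E^s u}{\d |E^s u|}(x)\right) = 0$ for $|E^s u|$-a.e.\ $x$. Substituting the rank-one form from the first step and using the elementary identity $\mathrm{tr}(a \odot b) = a \cdot b$, this becomes $a(x) \cdot b(x) = 0$, which is precisely the claimed orthogonality $a(x) \perp b(x)$.

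No substantial obstacle is expected: the deep content is absorbed into Theorem~\ref{main} via Theorem~\ref{corBD}, while the perpendicularity is merely the distributional trace constraint read at the level of the polar. The only care needed is to verify that $\mathrm{tr}$ commutes with both the Lebesgue decomposition and the Radon--Nikod\'ym polar, which is immediate from the continuity and linearity of the trace map on $\R^{d\times d}_{\mathrm{sym}}$.
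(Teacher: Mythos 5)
Your proposal is correct and follows essentially the same route as the paper: apply Theorem~\ref{corBD} and then read off the orthogonality from the Lebesgue decomposition $Eu = \Ecal u\,\L^d + E^s u$, observing that $\diverg u = \mathrm{tr}(Eu) \in \Lrm^2(\Omega)$ forces the singular part $\mathrm{tr}(E^s u) = a(x)\cdot b(x)\,\abs{E^s u}$ to vanish. Your extra remarks about uniqueness of the decomposition and $\mathrm{tr}(a\odot b)=a\cdot b$ just make explicit what the paper leaves implicit.
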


\subsection{Normal currents, the Rademacher theorem, and metric currents}\label{intro:subseccurr}

Our next application of Theorem~\ref{main} deals with finite families of (Euclidean) normal currents, by which we obtain some  consequences concerning differentiability of Lipschitz functions and the theory of metric currents. We assume the reader to be  familiar with the theory of currents and with basic  multilinear algebra. We refer to~\cite[Chapters~1~\&~4]{Federer69} and Section~\ref{appl} below for the relevant notations and definitions.

To motivate our next  result,  recall that any \((d-1)\)-dimensional  normal current \(T\in \mathbf{N}_{d-1}(\R^d)\) without boundary (\(\partial T=0\)) can be identified via Hodge duality with the derivative of a  function \(u\in \BV_{\rm loc}(\R^d;\R)\), that is, \(T=\star D u\). Using this identification and the fact that \(\dim \Lambda_{d-1} (V)=1\) if and only if  \(\dim (V)=d-1\), Theorem~\ref{corBV} can be rephrased as follows.

\begin{corollary}
Let   \(T_1=\vec{T}_1\|T_1\|,\ldots,T_r=\vec{T}_r\|T_r\|\in \mathbf{N}_{d-1}(\R^d)\)  be \((d-1)\)-dimensional boundaryless normal currents, i.e.\ \(\partial T_i=0\) for \(i=1,\ldots,r\).  Let further \(\mu\in \M_+(\R^d)\) be a positive Radon measure such that
\[
  \mu\aac \|T_i\| \qquad
  \text{for \(i=1,\ldots,r\).}
\]
Then, for  \(\mu^s\)-a.e.\  \(x\in\R^d\) there exists a \((d-1)\)-dimensional subspace \(V_x \subset \R^d\)  such that \(\vec{T}_1(x),\ldots,\vec{T}_r(x)\in \Lambda_{d-1}(V_x)\). 
\end{corollary}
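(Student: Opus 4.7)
The plan is to invoke Alberti's rank-one theorem (Theorem~\ref{corBV}) on the $\R^r$-valued BV-function whose components correspond to $T_1,\ldots,T_r$ under Hodge duality.

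First, for each $i=1,\ldots,r$, use the identification recalled just before the statement: there is $u_i\in\BV_\loc(\R^d;\R)$ with $T_i=\star Du_i$, so that $\|T_i\|=|Du_i|$ and $\vec{T}_i(x)=\star e_i(x)$, where $e_i(x):=\frac{\d Du_i}{\d|Du_i|}(x)\in\R^d$. Since $\star$ maps the one-dimensional space $\Lambda_{d-1}(V)$ (for $V$ a $(d-1)$-plane) bijectively to the line $V^\perp\subset\R^d$, the desired conclusion is equivalent to showing that, for $\mu^s$-a.e.\ $x$, the vectors $e_1(x),\ldots,e_r(x)$ all lie on a common line through the origin.

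Next, assemble $u:=(u_1,\ldots,u_r)\in\BV_\loc(\R^d;\R^r)$. Since $|Du_i|\le |Du|$ in total variation, the hypotheses $\mu\aac|Du_i|$ yield $\mu\aac|Du|$. Writing $\mu=\phi\cdot|Du|$ by Radon--Nikod\'ym and decomposing $|Du|=g\,\L^d+|D^s u|$, one obtains $\mu^s=\phi\cdot|D^s u|$; in particular $\mu^s\aac|D^s u|$, and likewise $\mu^s\aac|D^s u_i|$ for every $i$.

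Now apply Theorem~\ref{corBV} to $u$: for $|D^s u|$-a.e.\ $x$, and hence for $\mu^s$-a.e.\ $x$, there exist $a(x)\in\R^r\setminus\{0\}$ and a unit vector $b(x)\in\R^d$ with $\frac{\d D^s u}{\d|D^s u|}(x)=a(x)\otimes b(x)$. Reading off the $i$-th row yields $\frac{\d D^s u_i}{\d|D^s u|}(x)=a_i(x)\,b(x)$ and $\frac{\d|D^s u_i|}{\d|D^s u|}(x)=|a_i(x)|$, so the set $\{a_i=0\}$ is $|D^s u_i|$-null, hence $\mu^s$-null by $\mu^s\aac|D^s u_i|$. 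Thus at $\mu^s$-a.e.\ $x$ one has $a_i(x)\neq 0$ for all $i$ and therefore $e_i(x)=\mathrm{sign}(a_i(x))\,b(x)$; every $e_i(x)$ is parallel to $b(x)$, and $V_x:=b(x)^\perp$ is the required $(d-1)$-plane. The only (mild) subtlety is the bookkeeping needed to transfer the rank-one conclusion from $|D^s u|$-a.e.\ down to $\mu^s$-a.e., and to restate it in terms of the individual polars $\vec{T}_i$ rather than of $Du$ as a whole.
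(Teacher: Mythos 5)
Your argument is correct and is essentially the route the paper intends: the corollary is presented there precisely as a rephrasing of Alberti's rank-one theorem (Theorem~\ref{corBV}) via the Hodge-duality identification $T_i=\star Du_i$, and your proof just carries this out, assembling $u=(u_1,\ldots,u_r)$ and doing the (correct) bookkeeping $\mu^s\aac|D^su|$, $\mu^s\aac|D^su_i|$ to get a common direction $b(x)$ and hence $V_x=b(x)^\perp$ at $\mu^s$-a.e.\ point.
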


As another simple application of Theorem~\ref{main} we can generalize the above statement to finite families of  normal currents (not  necessarily of the same dimension).

\begin{theorem}\label{thm:curr}
Let $\Omega \subset \R^d$ be an open set and let \(T_1=\vec{T}_1\|T_1\|\in  \mathbf{N}_{k_1}(\Omega), \ldots \,, T_r=\vec{T}_r\|T_r\|\in \mathbf{N}_{k_r}(\Omega)\) be normal currents, where \(k_1,\ldots,k_r\in \{1,\ldots,d\}\), \(r\in \N\). Let further \(\mu\in \M_+(\Omega)\) be a positive Radon measure such that
\[
  \mu\aac \|T_i\| \qquad
  \text{for \(i=1,\ldots,r\).}
\]
Then, for  \(\mu^s\)-a.e.\  \(x\in \Omega\) there exists a \(1\)-covector \(\omega_x\in \Lambda^1(\R^d) \setminus \{0\}\) such that 
\[
\vec{T}_1(x)\imult \omega_x=\ldots=\vec{T}_r(x)\imult \omega_x=0.
\]
Equivalently, for  \(\mu^s\)-a.e.\ \(x \in \Omega\), \(\vec{T}_1(x)\in \Lambda_{k_1}(\Ker \omega_x),\ldots,\vec{T}_r(x)\in \Lambda_{k_r}(\Ker \omega_x)\).
 \end{theorem}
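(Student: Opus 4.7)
I would bundle the given currents into a single vector-valued Radon measure and apply Theorem~\ref{main} (via the right-hand side extension described in Remark~\ref{rmk:rhs}) to the associated boundary operator.

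Set \(E := \bigoplus_{i=1}^r \Lambda_{k_i}(\R^d)\), \(F := \bigoplus_{i=1}^r \Lambda_{k_i-1}(\R^d)\), and consider the finite Radon measure \(T := (T_1,\dots,T_r) \in \M(\Omega;E)\). Let \(\A\) be the first-order linear constant-coefficient differential operator sending \(E\)-valued distributions to \(F\)-valued ones via \(\A(S_1,\dots,S_r) := (\partial S_1,\dots,\partial S_r)\). The normality of each \(T_i\) yields \(\A T \in \M(\Omega;F)\), so Remark~\ref{rmk:rhs} applies. A direct computation shows that the principal symbol of the boundary operator on \(k\)-vector fields is (up to the non-zero scalar \(\pm 2\pi\ii\)) the contraction \(v \mapsto v \imult \xi\), where \(\xi\) is identified with a \(1\)-covector; hence
\[
\Lambda_\A \;=\; \bigcup_{\xi \neq 0} \setn{(v_1,\dots,v_r) \in E}{v_i \imult \xi = 0 \text{ for every } i=1,\dots,r}.
\]
Theorem~\ref{main} therefore supplies, for \(|T|^s\)-a.e.\ \(x\), a non-zero \(1\)-covector \(\omega_x\) such that
\[
\tfrac{\d T_i}{\d|T|}(x) \imult \omega_x = 0 \qquad \text{for every } i=1,\dots,r.
\]

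It remains to promote ``\(|T|^s\)-a.e.'' to ``\(\mu^s\)-a.e.'' and to replace \(\tfrac{\d T_i}{\d|T|}\) by \(\vec T_i\). Working, say, with the \(\ell^1\)-norm on \(E\), one has \(|T| = \sum_i \|T_i\|\); in particular \(\|T_i\| \le |T|\), so the hypothesis \(\mu \aac \|T_i\|\) for every \(i\) yields \(\mu \aac |T|\), whence \(\mu^s \aac |T|^s\) by the standard Lebesgue decomposition argument. Writing \(\|T_i\| = f_i |T|\), the set \(\{f_i = 0\}\) is \(\|T_i\|\)-null and hence \(\mu\)-null, so at \(\mu\)-almost every \(x\) we have \(f_i(x) > 0\) for all \(i\) and \(\tfrac{\d T_i}{\d|T|}(x) = f_i(x)\,\vec T_i(x)\). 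The previous display therefore becomes \(\vec T_i(x) \imult \omega_x = 0\), which is the desired conclusion.

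The only non-routine step is the identification of the wave cone; the rest is Radon--Nikod\'ym bookkeeping. Specifically, one must verify that the Fourier symbol of the boundary operator on \(k\)-vector valued measures is interior multiplication by the frequency covector and then assemble the diagonal operator \(\A\) so that the tuple \((T_1,\dots,T_r)\) is covered by a single application of Theorem~\ref{main}; once this is in place the structural statement for families of normal currents is immediate.
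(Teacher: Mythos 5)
Your proposal is correct and follows essentially the same route as the paper: bundle the currents into one measure, apply Theorem~\ref{main} together with Remark~\ref{rmk:rhs} to the diagonal boundary operator, identify its symbol (up to $-2\pi\ii$) with interior multiplication by the frequency covector as in \eqref{boundary}, and finish with Radon--Nikod\'ym bookkeeping. The only cosmetic difference is that you pass from the polar of the bundled measure to $\vec T_i$ via positivity of $f_i=\frac{\d\|T_i\|}{\d|T|}$ $\mu$-a.e., whereas the paper uses the reciprocal derivative $\frac{\d|\boldsymbol{T}|}{\d\|T_i\|}$; these are equivalent.
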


\begin{remark}\label{rmk:curios}
Let us note in passing the following curious consequence of the above result: It is well known that, apart from the trivial cases \(k\in \{1,d-1,d\}\), the orienting vector \(\vec T\) of a \(k\)-dimensional normal current \(T\) need not be simple, i.e.\ of the form \(\vec T(x)=v_1(x)\wedge\ldots\wedge v_k(x)\), \(v_i(x)\in \R^d\). However, if \(\dim V=(d-1)\), then  every \(w\in \Lambda_{d-2}(V)\) is necessarily simple. Thus, we have that for \(T\in \mathbf{N}_{d-2}^{\textrm{loc}}(\R^d)\) the simplicity of $\vec{T}$ holds \(\|T\|^s\)-almost everywhere. Note that the current
 \[
 T=(e_1\wedge e_2+e_3\wedge e_4)\,\H^4\res\{x_5=0\}\in \mathbf N_{2}^{\textrm{loc}} (\R^5),
 \]
shows that this statement is false for $k$-dimensional currents with \(1<k<(d-2)\).
\end{remark}

A particularly relevant instance  of  Theorem~\ref{thm:curr}  is obtained when  \(r=d\) and  \(k_1=\ldots=k_d=1\). In view of the  subsequent applications, let us  state it in a slightly different (but equivalent) formulation:

\begin{corollary}\label{cor:1curr}
Let \(T_1=\vec{T}_1 \|T_1\|,\ldots,T_d=\vec{T}_d \|T_d\|\in \mathbf{N}_1(\R^d)\) be one-dimensional normal currents  such that there exists a positive Radon measure \(\mu\in \M_+(\R^d)\) with the following properties:
\begin{itemize}
\item[(i)]  \(\mu\aac \|T_i\|\) for \(i=1,\ldots,d\), 
\item[(ii)] for \(\mu\)-almost every \(x \), \(\Span\{\vec {T}_1(x),\ldots,\vec{T}_d(x)\}=\R^d\).
\end{itemize}
Then, \(\mu\aac \L^{d}\).
\end{corollary}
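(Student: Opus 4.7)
The plan is to apply Theorem~\ref{thm:curr} directly with $r = d$ and $k_1 = \cdots = k_d = 1$, and then use the spanning hypothesis (ii) to force $\mu^s = 0$. The whole argument should be a short incompatibility check, not a new structural result.

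First I would invoke Theorem~\ref{thm:curr} to obtain that, for $\mu^s$-a.e.\ $x \in \R^d$, there exists a nonzero $1$-covector $\omega_x \in \Lambda^1(\R^d) \setminus \{0\}$ with
\[
\vec{T}_1(x) \imult \omega_x = \cdots = \vec{T}_d(x) \imult \omega_x = 0.
\]
Since each $\vec{T}_i(x) \in \Lambda_1(\R^d) \cong \R^d$ is just a vector, this interior product is nothing but the dual pairing $\omega_x(\vec{T}_i(x))$. Equivalently (as noted in Theorem~\ref{thm:curr}), $\vec{T}_i(x) \in \Lambda_1(\Ker \omega_x) = \Ker \omega_x$ for every $i = 1, \ldots, d$. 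Because $\omega_x \neq 0$, the space $\Ker \omega_x$ is a proper $(d-1)$-dimensional hyperplane of $\R^d$.

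Next, I would combine this with hypothesis (ii): since $\mu^s \le \mu$, for $\mu^s$-a.e.\ $x$ the vectors $\vec{T}_1(x), \ldots, \vec{T}_d(x)$ also span all of $\R^d$. This is incompatible with their all lying in the proper hyperplane $\Ker \omega_x$. Therefore the set where the conclusion of Theorem~\ref{thm:curr} can hold is $\mu^s$-null, which forces $\mu^s \equiv 0$, i.e., $\mu \aac \L^d$, as claimed. I do not foresee any genuine obstacle here; the only care needed is to observe that the spanning condition in (ii), originally formulated $\mu$-a.e., transfers to $\mu^s$-a.e., and to translate the interior-product condition into the transparent statement that all the $\vec{T}_i(x)$ lie in a common hyperplane.
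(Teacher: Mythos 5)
Your proposal is correct and matches the paper's own proof: the paper likewise applies Theorem~\ref{thm:curr} with $r=d$, $k_1=\cdots=k_d=1$ to place $\vec T_1(x),\ldots,\vec T_d(x)$ in a common $(d-1)$-dimensional subspace for $\mu^s$-a.e.\ $x$, and then concludes $\mu^s=0$ from the spanning hypothesis (ii). Your extra remarks (translating the interior product into $\vec T_i(x)\in\Ker\omega_x$ and noting that (ii) transfers from $\mu$-a.e.\ to $\mu^s$-a.e.) are just the details the paper leaves implicit.
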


This answers the question about a higher-dimensional analogue of~\cite[Proposition~8.6]{AlbertiCsornyeiPreiss05}. By the trivial identification  of one-dimensional normal currents with vector-valued measures, Corollary~\ref{cor:1curr} can be stated in the following equivalent formulation, which in a sense is dual to Theorem~\ref{corBV}. It can be also directly inferred from Theorem~\ref{main}.

\begin{corollary}\label{corDiv}
Let \(\mu\in \M(\Omega;\R^{d\times d})\) be a matrix-valued measure such that
\[
  \Div \mu \in \M(\Omega; \R^d).
\] 
Then,
\[
{\rm rank } \biggl(\frac{\d \mu}{\d |\mu|}(x) \biggr) \le d-1  \qquad
\text{for \(|\mu|^s\)-a.e.\ \(x\in \Omega\).}
\]
\end{corollary}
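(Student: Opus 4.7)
The plan is to apply Theorem~\ref{main} directly, with the choice of operator $\A = \Div$ acting row-wise on matrix-valued measures. Writing $\mu = (\mu_{ij})_{i,j=1}^d$, the equation $\Div\mu = \sigma \in \M(\Omega;\R^d)$ reads componentwise as $\sum_j \partial_j \mu_{ij} = \sigma_i$, so $\A$ is a first-order constant-coefficient linear differential operator from $\R^{d\times d}$-valued measures to $\R^d$-valued distributions, with coefficients $A_{e_j} \colon \R^{d\times d} \to \R^d$, $A_{e_j} M = M e_j$. Its principal symbol is therefore
\[
  \AA^1(\xi) M \;=\; 2\pi\ii \sum_{j=1}^d \xi_j (M e_j) \;=\; 2\pi\ii\, M\xi, \qquad \xi\in\R^d,\ M\in\R^{d\times d}.
\]
Consequently,
\[
  \Lambda_\A \;=\; \bigcup_{|\xi|=1} \Ker \AA^1(\xi) \;=\; \setb{M\in\R^{d\times d}}{\exists\, \xi\neq 0,\ M\xi=0} \;=\; \setb{M\in\R^{d\times d}}{\rk M \le d-1}.
\]
Thus the desired conclusion $\rk(\d\mu/\d|\mu|(x)) \le d-1$ is precisely the inclusion $\d\mu/\d|\mu|(x) \in \Lambda_\A$.

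Next I would handle the (possibly nonzero) right-hand side using Remark~\ref{rmk:rhs}. Setting $\tilde\mu := (\mu,\sigma) \in \M(\Omega;\R^{d\times d}\times\R^d)$ and letting $\tilde\A$ denote the first-order operator $\tilde\A(\nu,\tau) := \A\nu - \tau$ (with an additional zeroth-order term acting on the $\tau$-component), the equation $\Div\mu = \sigma$ becomes $\tilde\A\tilde\mu = 0$. Since $\A$ has order $k=1\ge 1$, the remark gives $\Lambda_{\tilde\A} = \Lambda_\A \times \R^d$. Applying Theorem~\ref{main} to $\tilde\mu$ then yields
\[
  \frac{\d\tilde\mu}{\d|\tilde\mu|}(x) \;\in\; \Lambda_\A \times \R^d \qquad \text{for } |\tilde\mu|^s\text{-a.e. } x\in\Omega.
\]

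To finish, I would invoke the second assertion of Remark~\ref{rmk:rhs}: at $|\mu|$-a.e.\ point, the polar $\d\mu/\d|\mu|$ is proportional to the first block of $\d\tilde\mu/\d|\tilde\mu|$, and $|\mu|^s$ is carried by a $|\tilde\mu|^s$-a.e.\ set (since $|\mu| \le |\tilde\mu|$ implies $|\mu|^s \ll |\tilde\mu|^s$). Because $\Lambda_\A$ is a cone, proportionality preserves membership in it, and we conclude that $\d\mu/\d|\mu|(x) \in \Lambda_\A$, i.e.\ has rank at most $d-1$, for $|\mu|^s$-a.e.\ $x\in\Omega$. The only place where care is needed is the bookkeeping between the polars of $\mu$ and $\tilde\mu$ on the singular part, but this is exactly the content of Remark~\ref{rmk:rhs}, so no real obstacle arises once Theorem~\ref{main} is granted.
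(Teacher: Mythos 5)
Your proposal is correct and follows essentially the same route as the paper: compute the symbol $\AA(\xi)M = 2\pi\ii\, M\xi$, identify $\Lambda_\A$ with the matrices of rank at most $d-1$, and conclude via Theorem~\ref{main} together with Remark~\ref{rmk:rhs}. The only difference is that you unpack the reduction of Remark~\ref{rmk:rhs} (the augmented measure $\tilde\mu=(\mu,\sigma)$ and the proportionality of polars) explicitly, which the paper simply cites.
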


It has been noted in several places that the validity of the rank-one theorem for maps \(u\in \BV(\R^2;\R^2)\) has some direct implications concerning differentiability of Lipschitz functions and the structure of top-dimensional metric currents in the plane~\cite{Preiss90,AlbertiCsornyeiPreiss05,AlbertiCsornyeiPreiss10,PreissSpeight14,AlbertiMarchese15, Schioppa15}. Relying on~\cite{AlbertiMarchese15, Schioppa15},  we use Corollary~\ref{cor:1curr}  to extend these results to every dimension. In particular, Theorem~\ref{cor:curr} below  provides a positive answer to the case \(k=d\) of the  ``flat chain conjecture'' stated in~\cite[Section 11]{AmbrosioKirchheim00}, see~\cite[Theorem 1.6]{Schioppa15} for the case \(k=1\).

\begin{theorem}\label{cor:rad}
Let \(\mu\in \M_+(\R^d)\) be a positive Radon measure such that every Lipschitz map \(f:\R^d\to \R\) is differentiable \(\mu\)-almost everywhere. Then, \(\mu\aac\L^d\).
\end{theorem}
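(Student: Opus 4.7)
The plan is to combine the Alberti--Marchese theory of the decomposability bundle with Corollary~\ref{cor:1curr}. First, I would invoke the main result of~\cite{AlbertiMarchese15}, which associates to every Radon measure $\mu \in \M_+(\R^d)$ a $\mu$-measurable \emph{decomposability bundle} $x \mapsto V(\mu,x)$, a map into the Grassmannian of $\R^d$, with the following two properties: (i)~every Lipschitz function $f : \R^d \to \R$ is differentiable along $V(\mu,x)$ at $\mu$-a.e.\ $x$, and (ii)~this is sharp, i.e.\ there exists a Lipschitz function $f : \R^d \to \R$ that at $\mu$-a.e.\ $x$ fails to be differentiable along any direction in $\R^d \setminus V(\mu,x)$. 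Under the hypothesis that every Lipschitz function is $\mu$-a.e.\ differentiable on $\R^d$, the sharpness part (ii) forces $V(\mu,x) = \R^d$ at $\mu$-a.e.\ $x$.

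Next, I would exploit the construction of $V(\mu,x)$ in terms of one-dimensional normal currents (equivalently, $1$-rectifiable measures in the sense of Alberti--Marchese). By that construction, whenever a unit vector $v$ lies in $V(\mu,x)$ on a set of positive $\mu$-measure, there exists a normal current $T = \vec T\,\|T\| \in \mathbf{N}_1(\R^d)$ with $\mu \aac \|T\|$ on that set and with $\vec T(x)$ parallel to $v$ at $\|T\|$-a.e.\ point of it. Since $V(\mu,x) = \R^d$ $\mu$-a.e., a countable exhaustion argument (covering $\mu$ by at most countably many sets on each of which some $d$-tuple of directions drawn from a dense set in the Grassmannian spans $\R^d$) lets me produce finitely many one-dimensional normal currents $T_1, \ldots, T_d \in \mathbf{N}_1(\R^d)$ such that $\mu \aac \|T_i\|$ for $i = 1, \ldots, d$ and
\[
\Span \{\vec T_1(x), \ldots, \vec T_d(x)\} = \R^d \qquad \text{for } \mu\text{-a.e.\ } x.
\]

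Finally, the hypotheses of Corollary~\ref{cor:1curr} are now met with this choice of $T_1, \ldots, T_d$ and of $\mu$, so that corollary yields $\mu \aac \L^d$, as desired.

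The main obstacle is the second paragraph: translating the abstract statement ``$V(\mu,x) = \R^d$ $\mu$-a.e.'' into a \emph{finite} family of honest one-dimensional normal currents whose orienting vectors span $\R^d$ and whose total variations dominate $\mu$. This requires a careful reading of the Alberti--Marchese construction of the decomposability bundle (where $v \in V(\mu,x)$ is defined precisely by the existence of suitable $1$-rectifiable decompositions of $\mu$ with tangent $v$) together with a measurable selection / countable covering argument to pass from the pointwise span condition to a uniform choice of at most $d$ currents.
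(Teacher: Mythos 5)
Your proposal follows essentially the same route as the paper: \cite[Theorem 1.1]{AlbertiMarchese15} upgrades the differentiability hypothesis to $V(\mu,x)=\R^d$ for $\mu$-a.e.\ $x$, this is converted into $d$ one-dimensional normal currents $T_1,\dots,T_d$ with $\mu\aac\|T_i\|$ and $\Span\{\vec T_1(x),\dots,\vec T_d(x)\}=\R^d$ $\mu$-a.e., and Corollary~\ref{cor:1curr} then gives $\mu\aac\L^d$. The step you single out as the main obstacle is exactly the paper's Lemma~\ref{equiv}, which it obtains from \cite[Corollary 6.5]{AlbertiMarchese15} applied to a measurable selection of a basis of $V(\mu,x)$, rather than via your countable covering by fixed directions (which would also work, e.g.\ by applying Corollary~\ref{cor:1curr} piecewise).
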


\begin{theorem}\label{cor:curr}
Let \(T\in \mathbf M^{\rm met}_d(\R^d)\) be an Ambrosio--Kirchheim metric current of  dimension $d$, see~\cite{AmbrosioKirchheim00}. Then, \(\|T\|\aac \L^d\).  In particular, the space of \(d\)-dimensional metric currents in \(\R^d\) coincides with the space of  Federer--Fleming \(d\)-dimensional flat chains, \(\mathbf M^{\rm met}_d(\R^d)=\mathbf F_d(\R^d)\).
\end{theorem}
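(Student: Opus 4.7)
The strategy is to reduce the claim to Corollary~\ref{cor:1curr} by associating to $T$ a family of $d$ one-dimensional normal currents whose orientations span $\R^d$ at $\|T\|$-almost every point. For each $i = 1,\ldots,d$, I would define the $1$-dimensional pre-current
\[
T_i(f,g) := T(f,\, g,\, x_1, \ldots, \widehat{x_i}, \ldots, x_d),
\]
where $x_j$ denotes the $j$-th coordinate function on $\R^d$. The multilinearity, locality, continuity under equi-Lipschitz pointwise convergence, and finite-mass axioms of Ambrosio--Kirchheim transfer from $T$ to $T_i$, so $T_i \in \mathbf M_1^{\rm met}(\R^d)$. By Schioppa's theorem~\cite[Theorem~1.6]{Schioppa15} (the case $k=1$ of the flat chain conjecture), each $T_i$ is a Federer--Fleming flat chain, and a cut-off argument reduces matters to $T_i \in \mathbf N_1(\R^d)$.

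Next I would exploit that $T$ is top-dimensional. Since $\Lambda_d(\R^d)$ is one-dimensional, the orientation has the form $\vec T(x) = \theta(x)\, e_1 \wedge \cdots \wedge e_d$ with $|\theta(x)|=1$ for $\|T\|$-a.e.\ $x$. Expanding the relevant determinant and using the chain rule for metric currents (together with the alternating property, which kills all but one term) then identifies $T_i$, viewed as a vector-valued measure, with $(-1)^{i-1}\theta\, e_i\, \|T\|$. Testing against $h = x_i$ also gives the reverse estimate $\|T\| = |\mu_T| \le \|T_i\|$ with $\mu_T(f) := T(f,x_1,\ldots,x_d)$, so that in fact $\|T_i\| = \|T\|$ and $\vec T_i(x) = \pm\theta(x)\,e_i$. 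In particular $\|T\| \ll \|T_i\|$ trivially, and $\{\vec T_1(x), \ldots, \vec T_d(x)\}$ is a basis of $\R^d$ for $\|T\|$-almost every $x$. Applying Corollary~\ref{cor:1curr} with $\mu := \|T\|$ now yields $\|T\| \ll \L^d$, whence $T$ is represented by an $L^1$-density times $e_1 \wedge \cdots \wedge e_d$ and therefore lies in $\mathbf F_d(\R^d)$.

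The main technical difficulty is ensuring that each $T_i$ is genuinely a \emph{normal} current, i.e.\ that $\partial T_i(f) = T(1,\, f,\, x_1, \ldots, \widehat{x_i}, \ldots, x_d)$ has finite mass. For a generic $T \in \mathbf M^{\rm met}_d(\R^d)$, not assumed normal, this expression is a priori only continuous in the Lipschitz norm of $f$, not in the sup norm; equivalently, the distributional derivative $\partial_i \mu_T$ need not be a Radon measure. I would circumvent this by localization: multiply $T$ by compactly supported smooth cut-offs (or perform a Whitney-type decomposition) so that the auxiliary sliced $1$-currents become classical normal currents; apply Corollary~\ref{cor:1curr} on each localized piece to obtain local absolute continuity of $\|T\|$; and then patch the pieces together to conclude $\|T\| \ll \L^d$ globally, from which the flat chain identity $\mathbf M^{\rm met}_d(\R^d) = \mathbf F_d(\R^d)$ follows at once.
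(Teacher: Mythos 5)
There is a genuine gap at exactly the point you flag at the end: the sliced currents $T_i:=T\res\bigl(dx^1\wedge\cdots\wedge\widehat{dx^i}\wedge\cdots\wedge dx^d\bigr)$ are finite-mass metric $1$-currents, but they have no reason to be \emph{normal}, and no cut-off or Whitney-type localization can repair this. The obstruction is not at infinity: $\partial T_i(f)=T(1,f,x_1,\ldots,\widehat{x_i},\ldots,x_d)$ is, in the classical picture, (up to sign) the distributional derivative $\partial_i$ of the weighted mass measure, and this is a Radon measure only if that measure has BV-type regularity in the $i$-th direction. Even for currents of the form that the theorem eventually produces, namely $T=\theta\,\L^d\wedge e_1\wedge\cdots\wedge e_d$ with $\theta\in\Lrm^1\setminus\BVloc$, one has $T_i=\pm\theta\,\L^d\,e_i$ and $\partial(\chi T_i)$ acts as $\mp\,\partial_i(\chi\theta)$, which is not a measure for any cut-off $\chi$ if $\theta$ is nowhere locally BV; so the intermediate claim ``a cut-off argument reduces matters to $T_i\in\mathbf{N}_1(\R^d)$'' fails even in the absolutely continuous case. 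This matters because Corollary~\ref{cor:1curr} genuinely needs normality: its proof (via Theorem~\ref{thm:curr}, i.e.\ via Theorem~\ref{main} and Remark~\ref{rmk:rhs}) uses that the boundaries $\partial T_i$ are measures. Invoking Schioppa's $k=1$ flat-chain theorem does not help here, since a finite-mass flat chain need not be normal either.

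The missing ingredient is precisely what the paper imports instead: by \cite[Theorem~1.3]{Schioppa15} (not Theorem~1.6), the mass $\|T\|$ of \emph{any} finite-mass metric $d$-current in $\R^d$ admits $d$ independent Alberti representations; by the definition of the decomposability bundle this gives $V(\|T\|,x)=\R^d$ for $\|T\|$-a.e.\ $x$, and Alberti--Marchese's Corollary~6.5 (used in Lemma~\ref{equiv}) then produces $d$ genuinely normal one-dimensional currents --- constructed from the Alberti representations, \emph{not} by slicing $T$ --- with $\|T\|\aac\|T_i\|$ and spanning orientations, to which Corollary~\ref{cor:1curr} (equivalently Theorem~\ref{cor:rad}) applies. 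Your algebraic identifications ($\vec T(x)=\theta(x)\,e_1\wedge\cdots\wedge e_d$, $\vec T_i=\pm\theta e_i$, $\|T_i\|=\|T\|$, modulo a small argument that the coordinate $d$-tuple realizes the mass) and the final step from $\|T\|\aac\L^d$ to $T\in\mathbf F_d(\R^d)$ are fine, but without the detour through Alberti representations and the decomposability bundle the normality needed to invoke Corollary~\ref{cor:1curr} is simply not available, and the proof as proposed does not close.
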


Let us mention  that the last two theorems will also follow by a stronger result announced by Cs\"ornyei and Jones in~\cite{Jones11talk}, namely that for every Lebesgue null set \(E\subset \R^d\) there exists a Lipschitz map \(f:\R^d\to \R^d\) which is nowhere differentiable in \(E\), see the discussion in the introduction of~\cite{AlbertiMarchese15} for a detailed account of these type of results.

\subsection{Sketch of the proof}
We conclude this introduction with an outline of the main ideas behind the  proof of  Theorem~\ref{main}. Let us assume for simplicity that \(\A\)  is a first-order homogeneous operator, \(\A=\sum_\ell A_\ell \partial_\ell\).  Assume by contradiction that there is a set  \(E\) of  positive \(|\mu|^s\)-measure such that the polar vector \( \frac{\d \mu}{\d|\mu|}(x)\) is not in the wave cone \(\Lambda_\A\)  for every \(x\in E\). One can then find a point \(x_0\in E\) and a sequence \(r_j\downarrow0\) such that
\[
\wslim_{j\to \infty} \frac{(T^{x_0,r_j})_\sharp\mu}{|\mu|(B_{r_j}(x_0))}=\wslim_{j\to \infty} \frac{(T^{x_0,r_j})_\sharp\mu^s}{|\mu|^s(B_{r_j}(x_0))}=P_0 \nu,
\]
where  \(T^{x,r}:\R^d\to \R^d\) is the dilation map \(T^{x,r}(y)=(y-x)/r\), \(T^{x,r}_\sharp\) denotes the push-forward operator (that is, for any measure \(\sigma\) and Borel set \(B\), \([(T^{x,r})_\sharp\sigma](B):=\sigma(x+rB)\)), \(\nu\in {\rm Tan} (x_0,|\mu|)={\rm Tan} (x_0,|\mu|^s)\) is a non-zero tangent measure in the sense of Preiss~\cite{Preiss87}, and 
\[
P_0\defeq \frac{\d \mu}{\d|\mu|}(x_0)\notin \Lambda_\A.
\]
Moreover, one easily checks that
\[
\sum_{\ell=1}^d A_\ell P_0 \,\partial_\ell \nu=0  
\qquad \textrm{in \(\D'(\Omega;\R^n)\).}
\]
By taking the Fourier transform of the above equation, we get
\[
\AA(\xi)P_0 \, \hat \nu(\xi)=0,  \qquad \xi \in \R^d.
\]
Having assumed that \(P_0\notin \Lambda_\A\), this implies \({\rm supp}\,\hat \nu=\{0\}\) and thus \(\nu\aac \L^d\).  The latter fact, however, is not by itself a contradiction to 
\(\nu \in {\rm Tan} (x_0,|\mu|^s)\). Indeed, Preiss~\cite{Preiss87}  provided an example of a purely singular measure that has only multiples of Lebesgue measure as tangents (we also refer to~\cite{Oneill95} for a measure that has \emph{every} measure as a tangent at almost every point).

On the other hand, \(P_0\notin \Lambda_\A\) implies that  \(\AA(\xi)P_0\ne 0\), so one can hope for some sort of ``elliptic regularization'' that forces not only \(\nu\aac \L^d\) but also \(|\mu|^s\aac \L^d\) in a neighborhood of \(x_0\). In fact, this is (almost) the case: Inspired by Allard's Strong Constancy Lemma in~\cite{Allard86} and using some basic pseudo-differential calculus,  we can show that in the above situation not only 
\[
\nu_j\defeq  \frac{(T^{x_0,r_j})_\sharp\mu^s}{|\mu|^s(B_{r_j}(x_0))}\weakstarto \nu\aac \L^d
\]
but that, crucially, this convergence also holds in the total variation norm,
\[
  |\nu_j-\nu|(B_1)\to 0.
\]
  Since \(\nu_j\perp \L^d\), this latter fact easily gives a contradiction to $\nu \ll \L^d$ and concludes the proof of the theorem.

\subsection*{Acknowledgments} 

The authors would like to thank  A.~Massaccesi and D.~Vittone for useful discussions. G.~D.~P.\ is supported by the MIUR SIR-grant ``Geometric Variational Problems" (RBSI14RVEZ) and F.~R.\ acknowledges the support from an EPSRC Research Fellowship on ``Singularities in Nonlinear PDEs'' (EP/L018934/1).

\section{Proof of the main theorem}

\subsection{Notation} 

We denote by $\M(\Omega;\R^m)$ the space of all finite Radon measures on an open set $\Omega \subset \R^d$ with values in $\R^m$ and by \(\M_+(\Omega)\) the space of positive Radon measures on $\Omega$. We write \(\mu=\wslim_{j\to \infty} \mu_j\) or \(\mu_j \weakstarto \mu\) for the local weak*-convergence of $\mu_j$ to $\mu$, that is \(\int \varphi \,\di \mu_j \to \int \varphi \,\di \mu\) for all \(\varphi \in \Crm^0_c(\Omega)\), the set of all continuous functions with compact support in $\Omega$. The $d$-dimensional Lebesgue measure is  \(\L^d\). Given a Borel set \(B \subset \Omega\) and a measure \(\mu \in \M(\Omega;\R^m)\) (or \(\mu\in \M_+(\Omega)\)), we denote by \(\mu\res B\)  the restriction of \(\mu\) to \(B\).

The Lebesgue--Radon--Nikod\'{y}m decomposition of a Radon measure $\mu \in \M(\Omega;\R^m)$ is given as
\[
\mu=\frac{\d \mu } {\d |\mu|} |\mu| = \mu^a + \mu^s = g \L^d + \frac{\d \mu } {\d |\mu|} |\mu|^s,
\]
where $\frac{\d \mu } {\d |\mu|} \in \Lrm(\Omega,|\mu|;\R^m)$ is the polar of $\mu$, i.e.\ the Radon--Nikod\'{y}m derivative of $\mu$ with respect to $\mu$'s total variation measure \(|\mu| \in \M_+(\Omega)\), $\mu^a \ll \L^d$ is the absolutely continuous part of $\mu$ with density \(g \in \Lrm^1(\Omega)\), and $\mu^s \perp \L^d$ is the singular part of $\mu$. Note that here and in the following the terms ``singular'' and ``absolutely continuous'' are always understood with respect to the Lebesgue measure if not otherwise specified.

We will generically denote by   \(\A\) a $k$'th-order linear partial differential operator with constant coefficients that acts on smooth functions \(u\in \Crm^\infty(\R^d; \R^m)\) as 
\begin{equation*}\label{ufree}
\A u := \sum_{|\alpha|\le k}A_{\alpha} \partial^\alpha  u  \in \Crm^\infty(\R^d;\R^n),
\end{equation*}
where \(\alpha=(\alpha_1,\ldots,\alpha_d)\in (\N \cup \{0\})^d\) is a multi-index, \(\partial^\alpha=\partial_1^{\alpha_1}\ldots\partial_d^{\alpha_d}\), and   \(A_{\alpha}\in \R^{n\times m}\) are matrices. A vector-valued Radon measure \(\mu\in \M(\Omega;\R^m)\) is said to be \(\A\)-free if 
\begin{equation*}\label{mufree}
\A \mu=0\qquad \textrm{in \(\D'(\Omega;\R^n)\).}
\end{equation*}
Here, \(\D(\Omega;\R^n) = \Crm^\infty_c(\Omega;\R^n)\) is the set of \(\R^n\)-valued test functions in \(\Omega\) with the usual topology and \(\D'(\Omega;\R^n)\) is the set of \(\R^n\)-valued distributions on \(\Omega\).

Given \(\A\) as above, its \emph{symbol} $\AA \colon \R^d \to \R^{n \times m}$ is defined as
\[
\AA(\xi)\defeq\sum_{|\alpha|\le k}(2\pi i)^{|\alpha|}\, A_{\alpha}\xi^{\alpha},  \qquad
\xi \in \R^d,
\] 
where \(\xi^{\alpha}:=\xi_1^{\alpha_1}\ldots \xi_d^{\alpha_d}\). Note that for \(u\) in the Schwartz class \(\mathcal S(\R^d;\R^m)\),
\begin{equation*}
\hat{ \A u}(\xi)=\AA(\xi)\hat u(\xi),
\end{equation*}
where for  \(v\in \S(\R^d;\R^m)\)  we denote by   \(\hat v \) its  Fourier transform,
\[
\hat v(\xi)=\F[v](\xi):=\int v(x)\ee^{-2\pi \ii\, x\cdot \xi} \dd x,  \qquad \xi \in \R^d.
\]
We also  recall the definition  of the {\em wave cone} associated to \(\A\)~\cite{Tartar79,Murat81,Tartar83,DiPerna85}:
\begin{equation*}\label{wc}
\Lambda_\A:=\bigcup_{|\xi|=1} \Ker \AA^k(\xi) \subset \R^m \qquad\textrm{with}\qquad \AA^k(\xi)\defeq (2\pi i)^{k} \sum_{|\alpha|=k}A_{\alpha}\xi^{\alpha}.
\end{equation*}

\subsection{First-order operators}

For the sake of illustration, we first  treat  the case when  \(\A\) is  a  first-order homogeneous constant-coefficient differential operator, namely
\begin{equation}\label{firstorder}
\A\mu =\sum_{\ell=1}^d A_\ell \partial_\ell \mu = 0  \qquad \textrm{in \(\D'(\Omega;\R^n)\).}
\end{equation}

\begin{proof}[Proof of Theorem~\ref{main} assuming~\eqref{firstorder}]
We have
\[
\Lambda_\A=\bigcup_{|\xi|=1}\Ker \AA(\xi),
\qquad \AA(\xi)=\AA^1(\xi) =2\pi \ii\, \sum_{\ell=1}^d A_\ell \xi_\ell.
\]
Let 
\[
E\defeq\setBB{x\in \Omega}{ \frac{\d\mu}{\d|\mu|}(x)\notin \Lambda_\A },
\]
where the existence of $\frac{\d\mu}{\d|\mu|}(x)$ in the sense of the Besicovitch derivation theorem, see~\cite[Theorem~2.22]{AmbrosioFuscoPallara00}, is part of the definition of $E$.

Assume by contradiction that \(|\mu|^s(E)>0\). We  now choose a point \(x_0\in E\) and a sequence \(r_j\downarrow 0\) such that
\begin{itemize}
\item[(i)] $\displaystyle \lim_{j\to \infty} \frac{|\mu|^a(B_{r_j}(x_0))}{|\mu|^s(B_{r_j}(x_0))}=0$ and $\displaystyle \lim_{j\to \infty}  \mean{B_{r_j}(x_0)} \biggl|\frac{\d \mu}{\d|\mu|}(x)-\frac{\d \mu}{\d|\mu|}(x_0)\biggr|\dd|\mu|^s(x)=0;
$

\item[(ii)] there exists a positive Radon measure \(\nu\in \M_+(\R^d) \) with $\nu \res B_{1/2} \neq 0$ and such that 
\[
\nu_j\defeq \frac{(T^{x_0,r_j})_{\sharp}|\mu|^s}{|\mu|^s(B_{r_j}(x_0))}\weakstarto \nu;
\]
\item[(iii)] for the polar vector it holds that
\[
P_0 \defeq \frac{\d \mu}{\d|\mu|}(x_0)\notin \Lambda_\A
\]
and there is a positive constant \(c>0\) such that \(|\AA(\xi)P_0|\ge c|\xi|\) for \(\xi \in \R^d\).
\end{itemize}
Indeed, (i) holds at \(|\mu|^s\)-almost every point by classical measure theory, (ii) follows by the fact that for \(|\mu|^s\)-almost every \(x \in \Omega\) the space of tangent measures \({\rm Tan}(|\mu|^s,x)\) to \(|\mu|^s\) at \(x\) is non-trivial, see for instance~\cite[Theorem~2.5]{Preiss87} or~\cite[Lemma A.1]{Rindler11}, and finally, (iii) follows from the assumption \(|\mu|^s(E)>0\).

We now claim that  (i)--(iii) above imply that
\begin{gather}
0\ne \nu\res B_{1/2} \aac \mathcal L^d,  \label{tan1}\\
\lim_{j\to  \infty} |\nu_j-\nu|(B_{1/2}) =0. \label{tan2}
\end{gather}
Before proving \eqref{tan1} and \eqref{tan2}, let us show how to use them to conclude  the proof. Recall that \(\nu_j\perp \mathcal L^d\) and take Borel sets \(E_j \subset B_{1/2}\) with \(\L^d(E_j)=0=\nu(E_j)\) and \(\nu_j(E_j)=\nu_j(B_{1/2})\). Then, 
\[
\nu_j(B_{1/2})=\nu_j(E_j)\le |\nu_j-\nu|(B_{1/2})+\nu(E_j)=|\nu_j-\nu|(B_{1/2})\to 0,
\]
thanks to \eqref{tan2}. Hence, we infer $\nu(B_{1/2}) = 0$, in contradiction to~\eqref{tan1}. Thus, \(|\mu|^s(E)=0\), concluding the proof of the theorem.

We are thus left to prove \eqref{tan1} and \eqref{tan2}.  Let us assume that \(x_0=0\) and set \(T^r:=T^{x_0,r}\).
Clearly,
\[
  \A\big(T^r_\sharp  \mu\big)=0  \qquad\text{in \(\D'(B_1;\R^n)\).}
\]
Therefore, with \(\nu_j\) defined as in (ii) above and \(c_j:=|\mu|^s(B_{r_j})^{-1}\),
\begin{equation}\label{pezzi2}
\A(P_0\nu_j)= \A(P_0\nu_j-c_jT^{r_j}_\sharp\mu).
\end{equation}
Let now \(\{\varphi_\eps\}_{\eps>0}\) be a compactly supported, smooth, and positive approximation of the identity. By the lower semicontinuity of the total variation,
\[
|\nu_j-\nu|(B_{1/2})\le \liminf_{\eps\to 0}\, |\nu_j* \varphi_{\eps}-\nu|(B_{1/2}).
\]
Thus, for every \(j\) we can find \(\eps_j\le 1/j\) such that
\begin{equation}\label{quasi}
|\nu_j-\nu|(B_{1/2})\le |\nu_j* \varphi_{\eps_j}-\nu |(B_{1/2})+\frac{1}{j}.
\end{equation}
We now convolve \eqref{pezzi2} with \(\varphi_{\eps_j}\) to get
\begin{equation}\label{pezzi3}
\A(P_0 u_j)= \A(V_j),
\end{equation}
where we have set
\begin{equation*}\label{set}
u_j\defeq \nu_j* \varphi_{\eps_j}, \qquad V_j\defeq \big[P_0\nu_j-c_jT^{r_j}_\sharp\mu \big]* \varphi_{\eps_j}.
\end{equation*}
Note that \(u_j, V_j\) are smooth, \(u_j\ge 0\), and
\begin{equation}\label{conv}
u_j \weakstarto \nu.
\end{equation}	
Moreover, recalling that \(x_0=0\) and \(c_j=|\mu|^s(B_{r_j})^{-1}\), by the definition of  \(V_j\), \(\nu_j\), \(P_0\) and standard properties of convolutions, see~\cite[Theorem 2.2]{AmbrosioFuscoPallara00},  for \(\eps_j\le 1/4\) it holds that
\begin{equation*}
\begin{split}
\int_{B_{3/4}} |V_j| \dd x&\le
  \frac{\big|P_0\, T^{r_j}_\sharp|\mu|^s-T^{r_j}_\sharp\mu \big|(B_1)}{|\mu|^s(B_{r_j})}\\
  &\le  \frac{ \big|P_0\, |\mu|^s-\mu^s \big|(B_{r_j})}{|\mu|^s(B_{r_j})} + \frac{|\mu|^a(B_{r_j})}{|\mu|^s(B_{r_j})} \\
 &= \mean{B_{r_j}} \biggl|\frac{\d \mu}{\d|\mu|}(0)-\frac{\d \mu}{\d|\mu|}(x)\biggr|\d|\mu|^s(x)+\frac{|\mu|^a(B_{r_j})}{|\mu|^s(B_{r_j})}.
\end{split}
\end{equation*}
Hence, by (i) above,
\begin{equation}\label{infinitesimal}
 \lim_{j\to \infty}\int_{B_{3/4}} |V_j| \dd x =0.
\end{equation}

Take a cut-off function \(\chi\in \D(B_{3/4})\) with  \(0\le \chi\le 1\) and \(\chi\equiv1\) on \(B_{1/2}\). Then, \eqref{pezzi3} implies that 
\begin{equation}\label{pezzi4}
\A(P_0 \chi u_j)= \chi  \A(P_0 u_j) + \A(P_0 \chi)u_j = \A(\chi V_j)+R_j,
\end{equation}
where the remainder terms \( R_j\defeq \A(P_0\chi)u_j -\sum_{\ell} A_\ell V_j \partial_{\ell}\chi  \) are smooth, compactly supported in \(B_1\), and  satisfy
\[
\sup_{j} \int_{B_1} |R_j| \dd x \le C
\]
for some constant \(C\) thanks to  \eqref{conv} and \eqref{infinitesimal}. Taking the Fourier transform of \eqref{pezzi4}, we obtain
\begin{equation*}
[\AA(\xi)P_0] \, \hat {\chi u_j}(\xi)= \AA(\xi) \hat {\chi V_j}(\xi)+\hat{R}_j(\xi).
\end{equation*}
Now multiply  by  \([\AA(\xi)P_0]^* = \overline{[\AA(\xi)P_0]^T}\) and add \(\hat {\chi u_j}(\xi)\) to both sides of the above equation to obtain 
\begin{equation*}
(1+|\AA(\xi)P_0|^2) \, \hat {\chi u_j}(\xi)= [\AA(\xi)P_0]^* \AA(\xi) \, \hat {\chi V_j}(\xi)+ \hat {\chi u_j}(\xi) + [\AA(\xi)P_0]^*  \hat{R}_j(\xi),
\end{equation*}
which can be rewritten as 
\begin{equation*}
\begin{split}
 \hat {\chi u_j}(\xi)&= \frac{[\AA(\xi)P_0]^* \AA(\xi) \, \hat {\chi V_j}(\xi)}{1+|\AA(\xi)P_0|^2}
  + \frac{1+4\pi^2|\xi|^2}{1+|\AA(\xi)P_0|^2} \cdot \frac{\hat {\chi u_j}(\xi)}{1+4\pi^2|\xi|^2}\\
 &\qquad +\frac{(1+4\pi^2|\xi|^2)^{1/2} [\AA(\xi)P_0]^*}{1+|\AA(\xi)P_0|^2}\cdot \frac{\hat{R}_j(\xi)}{(1+4\pi^2|\xi|^2)^{1/2}}.
\end{split}
\end{equation*}
Hence,
\begin{equation}\label{pezzi5}
\chi u_j=T_0 [\chi V_j]+T_1[\chi u_j]+T_2[R_j] =: f_j+g_j+h_j
\end{equation}
with
\begin{align*}
T_0[V] &:= \F^{-1} \Bigl[(1+|\AA(\xi)P_0|^2)^{-1}[\AA(\xi)P_0]^* \AA(\xi)\hat {V}(\xi)\Bigr],\\
T_1[u]&:=\F^{-1} \Bigl[m_1(\xi)(1+4\pi^2|\xi|^2)^{-1}\hat {u}(\xi)\Bigr],\\
T_2[R]&:=\F^{-1} \Bigl[m_2(\xi)(1+4\pi^2|\xi|^2)^{-1/2} \hat {R}(\xi)\Bigr],
\end{align*}
where we have set
\begin{align*}
m_1(\xi)&=(1+|\AA(\xi)P_0|^2)^{-1}(1+4\pi^2|\xi|^2),\\
m_2(\xi)&=(1+|\AA(\xi)P_0|^2)^{-1}(1+4\pi^2|\xi|^2)^{1/2}[\AA(\xi)P_0]^*.
\end{align*}

By (iii) above, \(T_0\) is an operator associated with an H\"ormander--Mihlin multiplier (meaning that it has a smooth  symbol \(m_0(\xi)\) such that \(|\partial^\beta m_0(\xi)| \leq K|\xi|^{-|\beta|}\) for every multi-index \(|\beta|\le \lfloor d/2 \rfloor + 1 \) and some $K > 0$).   The  $\Lrm^1$--$\Lrm^{1,\infty}$ estimates~\cite[Theorem 5.2.7]{Grafakos14book1} in conjunction with~\eqref{infinitesimal} give
\begin{equation}\label{eq:1}
\sup_{\lambda\ge 0}\, \lambda\, \L^d \bigl( \{|f_j|>\lambda\}\bigr)\le C \|\chi V_j\|_{\Lrm^1}\to 0.
\end{equation}
Moreover, the operators  \(T_1\) and \(T_2\) are compact from \(\Lrm^1_c(B_1)\) to \(\Lrm^1_{\rm loc}(\R^d)\), where \(\Lrm^1_c(B_1)\) is the set of \(\Lrm^1\)-functions vanishing outside \(B_1\). Indeed, by Lemma~\ref{compact} below, for every \(s>0\) the operator
\[
 ({\rm Id}-\Delta)^{-s/2}w =\F^{-1}\bigl[(1+4\pi^2|\xi|^2)^{-s/2}\hat w(\xi)\bigr]
 \]
is compact from \(\Lrm^1_c(B_1)\) to \(\Lrm^p(\R^d)\) for \(1<p<p(d,s)\) and by~\cite[Theorem 5.2.7]{Grafakos14book1}  the symbols \(m_1\) and \(m_2\) are \(\Lrm^p\)-multipliers. We conclude in particular that
\[
  \text{$\{g_j+h_j\}_j$ is precompact in  \(\Lrm^1_\loc(\R^d)\).}
\]

From~\eqref{infinitesimal} we further get
\begin{equation}\label{eq:2}
\dprb{ f_j,\varphi} = \dprb{ T_0[\chi V_j] ,\varphi} = \dprb{  \chi V_j ,T_0^*[\varphi]}\to 0  \qquad\text{for every \(\varphi \in \D(\R^d;\R^n)\),}
\end{equation}
where \(T_0^*:\S(\R^d;\R^n)\mapsto\S(\R^d,\R^m)\) is the adjoint of \(T_0\). Since \(\chi u_j\ge 0\), \eqref{pezzi5} gives that
\[
f_j^{-}\defeq\max\{0,-f_j\}\le |g_j+h_j|.
\]
As shown above, the family \(\{g_j+h_j\}_j\) is precompact in  \(\Lrm^1_\loc(\R^d)\) and thus the previous inequality implies the local equi-integrability of \(\{f_j^{-}\}\). Together with \eqref{eq:1}, \eqref{eq:2} and Lemma~\ref{fava} below this yields \(f_j \to 0\) in \(\Lrm^1_{\rm loc}(\R^d)\) and thus that the sequence \(\{\chi u_j\}\) is precompact in \(\Lrm_{\rm loc}^1(\R^d)\). Since also \(\chi u_j\weakstarto  \chi \nu\) by \eqref{conv}, we deduce that \(\chi \nu\in \Lrm^1(\R^d)\), which implies \eqref{tan1}, Moreover, 
\[
\chi u_j \to \chi \nu\qquad\textrm{in \(\Lrm^1(\R^d)\)},
\]
which, taking  into account \eqref{quasi}, implies \eqref{tan2}.  
\end{proof}

\subsection{General operators}

We now treat the general situation, namely the case of a measure \(\mu \in \M(\Omega;\R^n)\) satisfying
\begin{equation}\label{korder}
\A\mu =\sum_{|\alpha|\le k }A_\alpha \partial^\alpha \mu = 0  \qquad \textrm{in \(\D'(\Omega;\R^n)\).}
\end{equation}

\begin{proof}[Proof of Theorem~\ref{main}]
As before, let us set 
\[
E\defeq\setBB{x\in \Omega}{\frac{\d\mu}{\d|\mu|}(x)\notin \Lambda_\A}
\]
and  assume that \(|\mu|^s(E)>0\). Arguing as in the proof for first-order operators, we may find a point \(x_0\in E\) satisfying (i), (ii) above and also
\begin{itemize}
\item[(iii')] for the polar vector it holds that
\[
P_0 \defeq \frac{\d \mu}{\d|\mu|}(x_0)\notin \Lambda_\A
\]
and there is a positive constant \(c>0\) such that \(|\AA^k(\xi)P_0\big|\ge c|\xi|^k\) for \(\xi \in \R^d\).
\end{itemize}
We will show that  (i),~(ii) and~(iii') together imply  \eqref{tan1} and \eqref{tan2}, and thus yield the desired contradiction.

Assuming that \(x_0=0\), we note that~\eqref{korder} and a simple scaling argument give
\begin{equation*}\label{pezzi11}
\A^k\big(T^{r}_\sharp  \mu\big)+\sum_{h=0}^{k-1}  \A^{h}\big(r^{k-h} T^{r}_\sharp \mu\big)=0,
\end{equation*}
where \(\A^h\defeq \sum_{|\alpha|=h} A_\alpha \partial^\alpha\) is the \(h\)-homogeneous part of the operator $\A$.
Hence, with \(\nu_j\) defined as in (ii) and  \(c_j=|\mu|^s(B_{r_j})^{-1}\),
\begin{equation*}\label{pezzi12}
\begin{split}
\sum_{|\alpha|=k} A_\alpha\partial^\alpha (P_0  \nu_j)=\sum_{|\alpha|=k} A_\alpha\partial^\alpha \big(P_0 \nu_j-c_jT^{r_j}_\sharp  \mu\big)
-\sum_{h=1}^{k-1}  \A^{h}\big(r_j^{k-h} c_jT^{r_j}_\sharp \mu\big).
\end{split}
\end{equation*}
Mollification and localization now yield
\begin{equation}\label{equazione}
\sum_{|\alpha|=k} A_\alpha\partial^\alpha (P_0  \chi u_j)=\sum_{|\alpha|=k} A_\alpha \partial^\alpha (\chi V_j)+R_j.
\end{equation}
Here, as before,
\[
u_j\defeq \nu_j* \varphi_{\eps_j}, \qquad
V_j=[P_0 \nu_j-c_jT^{r_j}_\sharp  \mu]*\varphi_{\eps_j},
\]
where \(\chi\in \D(B_{3/4})\) with $0 \leq \chi \leq 1$, $\chi \equiv 1$ on $B_{1/2}$, and \(\varphi_{\eps_j}\) is a sequence of mollifier such that \eqref{quasi} is satisfied.  In particular, by (i), \(\|\chi V_j\|_{\Lrm^1}\to 0\). Moreover, the remainder term \(R_j\) can be written as a finite sum of smooth-coefficient  partial differential operators of order {\em at most \(k-1\)} applied to smooth  functions with  bounded \(\Lrm^1\)-norm and compact support:
\begin{equation*}\label{rem}
R_j=\sum_{h=0}^{k-1} \sum_{|\alpha|=h} b_{\alpha}(x) \partial^\alpha z^{\alpha}_j,
\end{equation*}
where \(b_{\alpha}(x)\in \D(B_{3/4})\), the functions \(z_j^{\alpha}\) are smooth and compactly supported,  and  \(\sup_{j} \|z_j^{\alpha}\|_{\Lrm^1}\le C\) for some constant \(C \). Namely, \(R_j=R_j^1+R_j^2+R_j^2\) where
\begin{align*}
R^1_j&=\sum_{|\alpha| =k} \sum_{\substack{\beta+\gamma=\alpha\\ |\gamma|\ge 1 }}c_{\beta\gamma} \partial^\gamma \chi\, {\partial^\beta}(A_\alpha P_0  \tilde \chi u_j),\\
R^2_j&=\sum_{|\alpha|= k} \sum_{\substack{\beta+\gamma=\alpha\\ |\gamma|\ge 1 }}c_{\beta\gamma} \partial^\gamma \chi\, {\partial^\beta}(A_\alpha \tilde \chi V_j),  
\\
R^3_j&=\sum_{|\alpha|\le k-1} \sum_{|\alpha|=h} \chi\, \partial^\alpha\Big( \tilde \chi A_{\alpha}(r_j^{k-h} c_jT^{r_j}_\sharp \mu)*\varphi_{\eps_j}\Big)
\end{align*}
with \(c_{\beta\gamma}\in \R\), and \(\tilde \chi\in \D(B_1)\) is  identically equal to $1$ on the support of \(\chi\).

By taking the Fourier transform of  \eqref{equazione} and performing the same computations as in the first part, but now multiplying with $[\AA^k(\xi)P_0]^*$ instead of $[\AA(\xi)P_0]^*$, we obtain 
\begin{equation}\label{eqauzione2}
\chi u_j=S_0 [\chi V_j] +S_1[\chi u_j]+\widetilde R_j,
\end{equation}
where \(S_0\)   and  \(S_1\) are given by 
\begin{align*}
S_0[V]&=\F^{-1} \Bigg[\frac{[\AA^k(\xi)P_0]^* \AA^k(\xi)\, \hat {V}(\xi) }{1+|\AA^k(\xi)P_0|^2} \Bigg],\\
S_1[u]&=\F^{-1} \Bigg[\frac{(1+4\pi^2|\xi|^2)^k}{1+|\AA^k(\xi)P_0|^2} \cdot \frac{\hat {u}(\xi)}{(1+4\pi^2|\xi|^2)^{k}}\Bigg].
\end{align*}
Applying the  H\"ormander--Mihlin multiplier theorem and arguing as for first-order operators, we deduce that 
\[
\sup_{\lambda \ge 0} \, \lambda\, \L^d \bigl(\{ |S_0 [\chi V_j] |>\lambda\}\bigr)\le C \|\chi V_j\|_{\Lrm^1(B_1)}\to 0 \qquad\textrm{and}\qquad S_0 [\chi V_j]\weakstarto 0.
\]
Moreover, the family \(\{S_1[\chi u_j]\}\) is precompact in \(\Lrm_\loc^1(\R^d)\).  To conclude the proof it is enough to show that \(\{\widetilde R_j\}\) is precompact in  \(\Lrm^1_\loc(\R^d)\), since then the application of Lemma~\ref{fava} as in the first part  will imply the validity of \eqref{tan1} and \eqref{tan2}. The generic term of \(\widetilde R_j \) can be written as 
\begin{equation*}\label{fj}
f_j^\alpha= Q\circ({\rm Id}-\Delta)^{-\frac{k}{2}} \circ P_{\alpha} \circ ({\rm Id}-\Delta)^{\frac{|\alpha|-k}{2}} [z^\alpha_j] 
\end{equation*}
where \(0\le |\alpha|\le (k-1)\), \(\sup_j\|z^\alpha_j\|_{\Lrm^1}\le C\),
\[
Q[z]=\F^{-1} \Bigl[(1+|\AA^k(\xi)P_0 |^2)^{-1}(1+4\pi^2|\xi|^2)^{k/2}\, \AA^k(\xi) \hat {z}(\xi)\Bigr] ,
\]
and \(P_\alpha\) is the $k$'th-order pseudo-differential operator given by
\[
P_{\alpha}[z](x)=\int b_{\alpha}(x)\frac{(2\pi\ii)^{|\alpha|}\xi^\alpha}{(1+4\pi^2|\xi|^2)^{\frac{|\alpha|-k}{2}}} \, \hat{z}(\xi) \, \ee^{2\pi\ii x \cdot \xi} \dd \xi,  \qquad x \in \R^d.
\]
The composition \(({\rm Id}-\Delta)^{-k/2} \circ P_{\alpha}\) is a pseudo-differential operator of order \(0\), see~\cite[Theorem 2, Chapter VI]{Stein93book},  and thus bounded from  \(\Lrm^p(\R^d)\) to itself, see~\cite[Proposition 4, Chapter VI]{Stein93book}. By (iii') and the   H\"ormander--Mihlin multiplier theorem,  also \(Q\) is a bounded  operator  from  \(\Lrm^p(\R^d)\) to itself. Since \(|\alpha|\le k-1\), Lemma~\ref{compact} below implies that \(({\rm Id}-\Delta)^{(|\alpha|-k)/2}\) is compact from \(\Lrm^1_c(B_1)\) to \(\Lrm^p(\R^d)\) for \(1<p<p(d,|\alpha|-k)\). We conclude that \(\{f_j^\alpha\}\) is precompact in \(\Lrm^1_\loc(\R^d)\). The validity of \eqref{tan1} and \eqref{tan2} now follows from \eqref{eqauzione2} by arguing as before.
\end{proof}

\subsection{Auxiliary results}

Finally, we prove the two simple technical lemmas that have been used in the proofs above. The first is an $\Lrm^1$-compactness result in the spirit of the Sobolev embedding theorems. Since we have not been able to find a reference we provide its simple proof.

\begin{lemma}\label{compact}
For  \(u\in \S(\R^d)\) and \(s>0\) define
\[
({\rm Id}-\Delta)^{-s/2}u:=\F^{-1}\bigl[(1+4\pi^2|\xi|^2)^{-s/2}\hat u (\xi)\bigr].
\] 
Then, \(({\rm Id}-\Delta)^{-s/2}\) extends to a compact map from \(\Lrm^1_c(B_1)\) to \(\Lrm^p(\R^d)\) for \(1\le p<p(d,s)\), where
\[
p(d,s)\defeq
\begin{cases}
\dfrac{d}{d-s}\qquad&\textrm{if \(s<d\),}\\
\infty &\textrm{otherwise,}
\end{cases}
\]
and \(\Lrm^1_c(B_1)\subset \Lrm^1(\R^d)\) is the set of \(\Lrm^1\)-functions supported in \(B_1\).
\end{lemma}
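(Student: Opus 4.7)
My plan is to realize $(\mathrm{Id}-\Delta)^{-s/2}$ as convolution against the Bessel kernel $G_s$, show $G_s \in \Lrm^p(\R^d)$ in the stated range, and then obtain compactness by approximation with smooth compactly supported kernels.

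\textbf{Step 1: Kernel representation.} First I would recall that, for $s>0$, the Fourier multiplier $(1+4\pi^2|\xi|^2)^{-s/2}$ is the Fourier transform of the Bessel potential kernel $G_s \in \Lrm^1(\R^d)$. Thus on $\Scal(\R^d)$ the operator acts by $(\mathrm{Id}-\Delta)^{-s/2}u = G_s \conv u$. The kernel $G_s$ is smooth on $\R^d\setminus\{0\}$, strictly positive, satisfies $G_s(x)\le C\ee^{-|x|/2}$ for $|x|\ge 1$, and obeys the pointwise bound
\[
G_s(x) \le C\begin{cases} |x|^{s-d} & \text{if } s<d,\\ 1+\log(1/|x|) & \text{if } s=d,\\ 1 & \text{if } s>d,\end{cases}
\qquad |x|\le 1.
\]
These are standard facts; see, e.g., Stein~\cite[Chapter~V]{Stein93book}.

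\textbf{Step 2: $\Lrm^p$-membership of $G_s$.} From the pointwise bounds, a direct integration in polar coordinates shows $G_s \in \Lrm^p(\R^d)$ exactly for $1\le p < p(d,s)$: the local singularity $|x|^{s-d}$ contributes $\int_0^1 r^{(s-d)p+d-1}\,\d r$, which converges iff $p < d/(d-s)$ (the cases $s\ge d$ being trivial), while the tail is controlled by exponential decay. By Young's convolution inequality, this gives the bounded extension
\[
\|(\mathrm{Id}-\Delta)^{-s/2}u\|_{\Lrm^p(\R^d)} \le \|G_s\|_{\Lrm^p}\|u\|_{\Lrm^1(\R^d)}, \qquad u\in \Lrm^1_c(B_1).
\]

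\textbf{Step 3: Approximation by smooth compactly supported kernels.} Since $\Crm^\infty_c(\R^d)$ is dense in $\Lrm^p(\R^d)$ for $p<\infty$, for every $\varepsilon>0$ I pick $K_\varepsilon \in \Crm^\infty_c(\R^d)$ with $\|G_s - K_\varepsilon\|_{\Lrm^p} \le \varepsilon$, and define the approximating operator $T_\varepsilon u \defeq K_\varepsilon \conv u$. By Young's inequality,
\[
\|(\mathrm{Id}-\Delta)^{-s/2}u - T_\varepsilon u\|_{\Lrm^p(\R^d)} \le \varepsilon\, \|u\|_{\Lrm^1(\R^d)},
\]
so it suffices to show each $T_\varepsilon$ is compact from $\Lrm^1_c(B_1)$ to $\Lrm^p(\R^d)$; then $(\mathrm{Id}-\Delta)^{-s/2}$ is a uniform limit of compact operators, hence compact.

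\textbf{Step 4: Compactness of $T_\varepsilon$.} Suppose $\supp K_\varepsilon \subset B_R$. Then for $u \in \Lrm^1_c(B_1)$, $T_\varepsilon u$ is supported in $B_{R+1}$. Moreover, differentiating under the convolution,
\[
\|\nabla T_\varepsilon u\|_{\Lrm^p(\R^d)} = \|(\nabla K_\varepsilon) \conv u\|_{\Lrm^p(\R^d)} \le \|\nabla K_\varepsilon\|_{\Lrm^p} \|u\|_{\Lrm^1(\R^d)}.
\]
Hence $T_\varepsilon$ maps the unit ball of $\Lrm^1_c(B_1)$ into a bounded subset of $\Wrm^{1,p}(B_{R+1})$, which embeds compactly into $\Lrm^p(B_{R+1}) = \Lrm^p(\R^d)$ (via Rellich--Kondrachov for $p<\infty$). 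This yields the compactness of $T_\varepsilon$ and completes the proof.

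The only mildly delicate point is the pointwise bound on $G_s$ near the origin that governs the threshold $p(d,s)$; everything else is a routine combination of Young's inequality, density, and Rellich--Kondrachov.
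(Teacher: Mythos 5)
Your proof is correct and follows essentially the same route as the paper: represent \((\mathrm{Id}-\Delta)^{-s/2}\) as convolution with the Bessel kernel, observe that this kernel lies in \(\Lrm^p(\R^d)\) precisely for \(1\le p<p(d,s)\), approximate it by smooth compactly supported kernels, and conclude by exhibiting the operator as an operator-norm limit of compact operators. The only cosmetic difference is that the paper approximates the kernel in \(\Lrm^1\) (writing it as \(b_{1,\eps}+b_{2,\eps}\) with \(b_{1,\eps}\in \Crm^1_c(\R^d)\) and \(\|b_{2,\eps}\|_{\Lrm^1}<\eps\)) and then passes from \(\Lrm^1\)-compactness to \(\Lrm^p\) via H\"older's inequality, whereas you approximate directly in \(\Lrm^p\) and get compactness of the approximating operators from \(\Wrm^{1,p}\)-bounds on a fixed ball plus Rellich--Kondrachov; both variants are sound.
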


\begin{proof}
For \(u\) in the Schwartz class we can write
\[
({\rm Id}-\Delta)^{-s/2}u=b(s,d)* u
\]
where \(b(s,d)=\F^{-1}[(1+4\pi|\xi|^2)^{-s/2}]\) is the Bessel potential of order \(s\), see~\cite[Section 6.1.2]{Grafakos14book2}. By classical estimates~\cite[Proposition 6.1.5]{Grafakos14book2},  \(b(s,d)\in \Lrm^p\) for \(1\le p<p(d,s)\) so that by Young's inequality for convolutions, \(({\rm Id}-\Delta)^{-s/2}u\in \Lrm^p\) for \(1\le p<p(d,s)\) (actually also for \(p=p(d,s)\) if \(s\ne d\)). For every  \(\eps>0\) we can write
\[
  b(s,d)=b_{1,\eps}+b_{2,\eps}  \qquad\text{with}\qquad
  b_{1,\eps}\in \Crm^1_c(\R^d) \quad\text{and}\quad
  \|b_{2,\eps}\|_{\Lrm^1}<\eps,
\]
see~\cite[Proposition~6.1.6]{Grafakos14book2}. Thus,
\[
  ({\rm Id}-\Delta)^{-s/2}u=b_{1,\eps}* u+b_{2,\eps}* u\defeq T_{1,\eps}[u]+T_{2,\eps}[u].
\]
Because \(b_{1,\eps}\in \Crm^1_c(\R^d)\),  \(T_{1,\eps}\) is  compact from \(\Lrm^1_c(B_1)\) to \(\Lrm^1(\R^d)\).  Moreover,
\[
\| ({\rm Id}-\Delta)^{-s/2}-T_{1,\eps}\|_{\Lrm^1\to\Lrm^1}\le \|T_{2,\eps}\|_{\Lrm^1\to\Lrm^1}\le \eps,
\]
so that  \( ({\rm Id}-\Delta)^{-s/2}\) is the limit in the uniform topology of compact operators and thus compact as well. The conclusion of the lemma now follows by H\"older's inequality.
\end{proof}

The second lemma is an easy consequence of the Vitali convergence theorem:

\begin{lemma}\label{fava}
Let \(\{f_j\} \subset \Lrm_c^1(B_1)\) be a family of functions such that
\begin{itemize}
\item[(a)] \(f_j\weakstarto 0\) in \( \D'(B_1)\);
\item[(b)] The negative parts of \(f_j\) tends to zero in measure in, i.e.\
 \[
\lim_{j\to \infty}\, \L^d\big(\{f_j^->\lambda\}\big)=0  \qquad\text{for every \(\lambda>0\);}
\]
\item[(c)]  the sequence of negative parts \(\{f_j^{-}\}\) is equi-integrable,
\[
\qquad \lim_{\L^d(E)\to 0} \sup_{j\in \N} \, \int_{ B_1} f_j^- \dd x = 0.
\]
\end{itemize}
Then, \(f_j \to0\) in  \(\Lrm^1_\loc(B_1)\).
\end{lemma}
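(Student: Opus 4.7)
The plan is to first upgrade hypotheses (b) and (c) to $\Lrm^1$-convergence of the negative parts via a Vitali-type argument, then combine this with the identity $|f_j| = f_j + 2f_j^-$ and the distributional convergence (a).

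First I would show that $f_j^- \to 0$ in $\Lrm^1(B_1)$. Given $\epsilon > 0$, use equi-integrability (c) to pick $\delta > 0$ with
\[
\sup_{j \in \N} \int_E f_j^- \dd x < \frac{\epsilon}{2}
\qquad \textrm{whenever } \L^d(E) < \delta.
\]
Then choose $\lambda > 0$ small enough that $\lambda \L^d(B_1) < \epsilon/2$, and, using (b), pick $j_0$ so that $\L^d(\{f_j^- > \lambda\}) < \delta$ for all $j \ge j_0$. Splitting $B_1 = \{f_j^- \le \lambda\} \cup \{f_j^- > \lambda\}$ and estimating each piece gives $\int_{B_1} f_j^- \dd x < \epsilon$ for $j \ge j_0$. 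This is essentially the content of Vitali's convergence theorem in this setting.

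Next, for any $\chi \in \D(B_1)$ with $0 \le \chi \le 1$, the pointwise identity $|f_j| = f_j + 2 f_j^-$ yields
\[
\int \chi \, |f_j| \dd x = \int \chi \, f_j \dd x + 2 \int \chi \, f_j^- \dd x.
\]
The first term tends to $0$ by (a), and the second tends to $0$ by the first step of the plan (since $\chi \in \Lrm^\infty$ and $f_j^- \to 0$ in $\Lrm^1(B_1)$). To conclude $\Lrm^1_\loc$-convergence, given any compact $K \subset B_1$, choose $\chi \in \D(B_1)$ with $\chi \equiv 1$ on $K$ and $0 \le \chi \le 1$; then $\|f_j\|_{\Lrm^1(K)} \le \int \chi \, |f_j| \dd x \to 0$.

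There is no serious obstacle here: the only subtle point is verifying that the equi-integrability hypothesis (c) together with the convergence-in-measure hypothesis (b) suffices to apply Vitali on the finite-measure set $B_1$, which the splitting argument above handles directly without needing a separate uniform $\Lrm^1$-bound.
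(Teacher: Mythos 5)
Your proposal is correct and follows essentially the same route as the paper: the identity $|f_j| = f_j + 2 f_j^-$ combined with hypothesis (a) for the signed term and a Vitali-type argument from (b) and (c) for the negative parts, the only difference being that you spell out the Vitali convergence theorem instead of citing it.
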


\begin{proof}
Let \(\varphi\in \D(B_1)\), \(0\le \varphi\le 1\). It is enough to show that 
\begin{equation}\label{tesi}
\lim_{j\to \infty}\int \varphi |f_j|  \dd x =0.
\end{equation}
We write 
\[
\int \varphi |f_j| \dd x =\int \varphi f_j \dd x +2\int \varphi f_j^- \dd x
\le \int \varphi f_j \dd x +2\int f_j^- \dd x.
\]
The first term on the right-hand side goes to \(0\) as \(j\to \infty\) by assumption~(a). Thanks to the Vitali convergence theorem, assumptions (b) and (c) further give that also the third term vanishes in the limit. Hence, \eqref{tesi} follows.
\end{proof}

\section{Applications}\label{appl}
Theorems~\ref{corBV},~\ref{corHigher} and~\ref{corBD} follow from Theorem~\ref{main} simply by applying it  to the differential constraints that gradients, higher gradients, or symmetrized gradients, respectively, have to satisfy.

\begin{proof}[Proof of Theorem~\ref{corBV}]
Let  \(\mu=(\mu^k_{j})\in \M(\Omega;\R^{\ell \times d})\) be the (distributional) gradient of a function \(u\in \BV(\Omega;\R^\ell)\), \(\mu=Du\). Then, 
\[
0=\partial_i \mu^{k}_{j}-\partial_j  \mu^{k}_{i}\qquad i,j=1,\ldots, d;\; k=1,\ldots,\ell.
\]
Setting
\[
  \A \mu := \big( \partial_j \mu^k_i - \partial_i \mu^k_j \bigr)_{i,j=1,\ldots,d;\, k=1,\ldots,\ell} \;,
\]
it is a simple algebraic exercise, carried out for instance in~\cite[Remark~3.5(iii)]{FonsecaMuller99}, to compute that
\[
\Lambda_\A=\setb{ a\otimes \xi \in \R^{\ell \times d} }{ a\in \R^\ell,\, \xi \in \R^d \setminus \{0\} }.
\]
Corollary~\ref{corBV} then follows directly from Theorem~\ref{main}.
\end{proof}

\begin{proof}[Proof of Theorem~\ref{corHigher}]
For the operator
\[
  \A \mu := \Bigl( \partial_j \mu^k_{\alpha_1 \cdots \alpha_h i \alpha_{h+2} \cdots \alpha_r} - \partial_i \mu^k_{\alpha_1 \cdots \alpha_h j \alpha_{h+2} \cdots \alpha_r} \Bigr)_{i,j,\alpha_1,\ldots,\alpha_n=1,\ldots,d;\, k=1,\ldots,\ell;\, h=1,\ldots,r}
\]
one can see that $\A \mu = 0$ if and only if $\mu$ is an $r$'th-order derivative, and furthermore compute that
\[
\Lambda_\A=\setb{ a\otimes \xi \otimes \cdots \otimes \xi \in \mathrm{SLin}^r(\R^d;\R^\ell) }{ a\in \R^\ell,\, \xi \in \R^d \setminus \{0\} };
\]
see~\cite[Example~3.10(d)]{FonsecaMuller99} for the details.
\end{proof}

\begin{proof}[Proof of Theorem~\ref{corBD}]
Let  \(\mu=(\mu_{j}^k)\in \M(\Omega ,\R^{d \times d}_{\mathrm{sym}} )\) be  the  (distributional) symmetrized gradient  of  \(u\in \BD(\Omega)\), \(\mu=Eu \). Then, by direct computation, see~\cite[Example 3.10(e)]{FonsecaMuller99},
\[
0=\A \mu:= \biggl( \sum_{i=1}^d \partial_{ik} \mu_{i}^j+\partial_{ij} \mu_{i}^k-\partial_{jk} \mu_{i}^i-\partial_{ii} \mu_{j}^k \biggr)_{j,k=1,\ldots,d} \;.
\]
These equations are often called the \emph{Saint-Venant compatibility conditions} in applications. Hence, for \(M\in  \R^{d \times d}_{\mathrm{sym}}\),
\[
-(4\pi)^{-2}\AA(\xi)M=(M\xi)\otimes \xi+\xi\otimes (M\xi)-({\rm tr} M) \, \xi\otimes \xi -|\xi|^2 M,
\]
which gives
\[
\Ker \AA^2(\xi) = \Ker \AA(\xi) = \setb{a\otimes \xi+\xi \otimes a}{a\in \R^d,\, \xi \in \R^d}.
\]
 Theorem~\ref{main} now implies the conclusion.
\end{proof}

\begin{proof}[Proof of Corollary~\ref{corU}]
The only fact to show in addition to the assertion of Corollary~\ref{corBD} is that $a(x) \cdot b(x) = 0$. For $Eu$ we have the Lebesgue--Radon--Nikod\'{y}m decomposition $Eu = \Ecal u \, \L^d + E^s u$ and thus
\[
  \diverg u = \mathrm{tr} (\Ecal u) \, \L^d + a(x) \cdot b(x) \, \abs{E^s u}
\]
Since $\diverg u \in \Lrm^2(\Omega)$, we must have $a(x) \cdot b(x) = 0$ for $\abs{E^s u}$-almost every $x \in \Omega$.
\end{proof}

Before proving  Theorem~\ref{thm:curr}, let us recall  some simple facts concerning (Euclidean) currents and multi-linear algebra. We  refer to~\cite{Federer69} for more details.

Given a finite dimensional vector space \(V\)   we let \(\Lambda_k(V)\) be  the set of \(k\)-vectors and  \(\Lambda^{k}(V)\simeq (\Lambda_{k}(V))^*\) be the set of \(k\)-covectors. If \(v\in \Lambda_k(V)\) and  \(\eta\in \Lambda^1(V)\), then the interior product of \(\eta\) with \(v\) is the \((k-1)\)-vector \(v\imult\eta \in \Lambda_{k-1}(V)\) defined by duality as  \(\langle v\imult\eta,\omega\rangle\defeq \langle v, \eta\wedge \omega\rangle\) for every \(\omega\in \Lambda^{k-1}(V)\), see ~\cite[Section 1.5]{Federer69}. 

Following~\cite[Section 4.1.7]{Federer69}, we let 
 \[
 \D^k(\Omega):=\D(\Omega,\Lambda^k(\R^d))\qquad  \textrm{and}\qquad \D_k(\Omega):=\D'(\Omega,\Lambda_k(\R^d))
 \]
be the sets of compactly supported  \(k\)-differential forms with smooth coefficients and  the set of \(k\)-dimensional currents, respectively. For \(T\in \D_k(\Omega)\) the boundary \(\partial T\in \D_{k-1}(\Omega)\) is defined by duality with the exterior differential via  \(\langle \partial T, \omega\rangle\defeq \langle T,d \omega\rangle\), where \(\omega\in \D^{k-1}(\Omega)\). One easily checks that 
\begin{equation}\label{boundary}
\partial T=-\sum_{i=1}^d \partial_i T\imult dx^i,
\end{equation}
see~\cite[p.~356]{Federer69}. Here,  for  \(T\in \D_k(\Omega)\) and  \(\eta\in \Crm^\infty(\Omega;\Lambda^1(\R^d))\), \(T\imult \eta\in \D_{k-1}(\Omega)\) is  defined as \(\langle T\imult \eta, \omega\rangle\defeq \langle T,\eta\wedge \omega\rangle\), \(\omega\in \D^{k-1}(\Omega)\) and $\partial_i T \in \Dcal_k(\Omega)$ is defined by duality via $\langle \partial_i T, \phi \, dx^{j_1} \wedge \cdots \wedge dx^{j_k} \rangle = - \langle T, \partial_i \phi \, dx^{j_1} \wedge \cdots \wedge dx^{j_k} \rangle$.

We endow \(\Lambda_k(\R^d)\) with the mass norm, see~\cite[Section 1.8]{Federer69}. A \(k\)-current is said to have finite mass if it can be extended to a \(\Lambda_k(\R^d)\)-valued (finite) Radon measure and we  let \(\|T\|\) be the total variation of \(T\) and
\[
  \vec T:=\frac {\dd T}{\dd \|T\|},
\]
see~\cite[Section 4.1.7]{Federer69}. In this context, the Radon--Nikod\'{y}m theorem reads as \(T=\vec T\|T\|\). We denote by \(\mathbf N_k(\Omega)\) the set of \(k\)-dimensional normal  currents, i.e.\ those currents such that \(T\) and \(\partial T\) both have finite mass.  Note that  the boundary of a \(k\)-dimensional normal current \(T\) can be seen as a \(\Lambda_{k-1}(\R^d\))-valued Radon measure, \(\partial T\in \M(\Omega;\Lambda_{k-1}(\R^d))\). 

\begin{proof}[Proof of Theorem~\ref{thm:curr}]
Let us set
 \[
 \boldsymbol{T}\defeq(T_1,\ldots,T_r)\in \M(\Omega;\Lambda_{k_1}(\R^d)\times\ldots\times \Lambda_{k_r}(\R^d))
 \]
and note that the assumption of  Theorem~\ref{thm:curr} can be rewritten as 
\[
\A\boldsymbol{T}\defeq (\partial T_1,\ldots,\partial T_r)\in \M(\Omega;\Lambda_{k_1-1}(\R^d)\times\ldots\times \Lambda_{k_r-1}(\R^d)).
\]
By applying Theorem~\ref{main} in conjunction with  Remark~\ref{rmk:rhs} we deduce that  for \(|\boldsymbol T|^s\)-almost every \(x\in\Omega\) there exists \(\xi_x\ne 0\) such that 
\begin{equation}\label{fame}
\frac{\dd \boldsymbol{T}}{\dd |\boldsymbol{T}|}(x)\in  \Ker \AA(\xi_x)\,.
\end{equation}
Thanks to  \eqref{boundary},  one easily checks that  for \(\boldsymbol v=(v_1,\ldots,v_r) \in \Lambda_{k_1}(\R^d)\times\ldots\times \Lambda_{k_r}(\R^d)\) it holds that
\begin{equation}\label{ker}
\AA(\xi)\boldsymbol v=-2\pi \ii\, \big(v_1\imult \omega_\xi,\ldots, v_r\imult\omega_\xi\big)\in \Lambda_{k_1-1}(\R^d)\times\ldots\times \Lambda_{k_r-1}(\R^d),
\end{equation}
where \(\omega_\xi\in \Lambda^1(\R^d)\) is  defined as \(\omega_\xi(w)\defeq w\cdot \xi\), \(w\in \R^d\).

Let \(\mu\in \M_+(\Omega)\) be  as in the statement of the theorem and  note that, since \(\mu\aac\|T_i\|\) for every \(i=1,\ldots,r\), the  Radon-Nikod\'{y}m  derivatives \(\frac{\dd |\boldsymbol{T}|}{\dd \|T_i\|}\) and  \(\frac{\dd T_i}{\dd |\boldsymbol{T}|}\) exist \(\mu\)-almost everywhere. Then,
\begin{equation}\label{noia}
 \vec T_i=\frac{\dd |\boldsymbol{T}|}{\dd \|T_i\|} \, \frac{\dd T_i}{\dd |\boldsymbol{T}|}\,.
\end{equation}
Since clearly \(\mu^s\aac |\boldsymbol T|^s\), the first part of the conclusion  with \(\omega_x=\omega_{\xi_x}\) follows from \eqref{fame}, \eqref{ker} and \eqref{noia}. 
It is now a simple exercise in linear algebra to see that the second part of the statement is equivalent to the first one.
\end{proof}

\begin{proof}[Proof of Corollary~\ref{cor:1curr}]
By Theorem~\ref{thm:curr}, assumption~(i) implies that for \(\mu^s\)-almost every \(x \in \R^d\) there exists a \((d-1)\)-dimensional subspace \(V_x\) such that
\[
  \vec T_1(x),\ldots,\vec T_d(x)\in V_x.
\]
Assumption~(ii) hence gives that \(\mu^s=0\), which is the desired conclusion.
\end{proof}

\begin{proof}[Proof of Corollary~\ref{corDiv}]
Let \(\mu = (\mu^k_j) \in \M(\Omega;\R^{d\times d})\) and let 
\[
\A \mu:= \diverg \mu = \biggl(\sum_{j=1}^{d}\partial_j \mu^k_j \biggr)_{k=1,\ldots,d} \; .
\]
Then, for \(M\in \R^{d\times d}\), \(\AA(\xi)M=(2\pi \ii)M\xi\),
so that
\[
  \Lambda_\A = \setb{ M \in \R^{d \times d} }{ \mathrm{rank}\, M \leq d-1 }.
\]
The conclusion  follows from Theorem~\ref{main} and Remark~\ref{rmk:rhs}.
\end{proof}

We will now  show how to obtain Theorems~\ref{cor:rad} and~\ref{cor:curr} from Corollary~\ref{cor:1curr}. In order to do so, we  assume the reader to be familiar with the work of Alberti \& Marchese~\cite{AlbertiMarchese15} concerning differentiability of Lipschitz functions, with the definition of metric currents given in~\cite{AmbrosioKirchheim00}, as well as with the work of Schioppa in~\cite{Schioppa15}. We refer to these papers also for notations and definitions.

Let us start with  the following lemma, which is essentially~\cite[Corollary 6.5]{AlbertiMarchese15}. 
\begin{lemma}\label{equiv}
Let \(\mu\in \M_+(\R^d)\) be a finite  positive Radon measure. Then the following are equivalent:
\begin{itemize}
\item[(i)] The decomposability bundle of \(\mu\) (see~\cite[Section 2.6]{AlbertiMarchese15}) is of full dimension,  \(V(\mu,x)=\R^d\) for \(\mu\)-a.e.\ \(x \in \R^d\).
\item[(ii)] There are \(d\) normal one-dimensional currents \(T_1=\vec T_1 \|T_1\|,\ldots, T_d = \vec T_d \|T_d\|\in \mathbf{N}_1(\R^d)\) 
 such that  \(\mu\aac \|T_i\|\) for \(i=1,\ldots,d\), and 
 \[
 \Span\bigl\{\vec T_1(x),\ldots,\vec T_d(x)\bigr\}=\R^d\qquad\textrm{for \(\mu\)-a.e.\ \(x \in \R^d\).}
\]
\end{itemize}
\end{lemma}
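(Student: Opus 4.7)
The plan is to derive this equivalence directly from the characterization of the decomposability bundle in terms of normal one-dimensional currents established in \cite{AlbertiMarchese15}; as signalled in the statement, this is essentially their Corollary~6.5, and both implications can be read off from the tools set up in that paper.

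For the implication (ii) $\Rightarrow$ (i), I would invoke Smirnov's decomposition theorem for normal one-dimensional currents: each $T_i$ can be written as a superposition of elementary rectifiable one-currents supported on Lipschitz curves. Combined with the hypothesis $\mu \ll \|T_i\|$, this exhibits $\mu$ as an integral of one-rectifiable measures carried by a family of Lipschitz curves whose tangent vectors at $x$ are parallel to $\vec T_i(x)$ for $\mu$-a.e.\ $x$. By the very definition of the decomposability bundle in \cite[Section~2.6]{AlbertiMarchese15}, this forces $\vec T_i(x) \in V(\mu,x)$ at $\mu$-a.e.\ point. The spanning assumption $\Span\{\vec T_1(x),\ldots,\vec T_d(x)\} = \R^d$ then immediately yields $V(\mu,x) = \R^d$ $\mu$-almost everywhere.

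For the converse (i) $\Rightarrow$ (ii), the construction of $V(\mu,\cdot)$ in \cite{AlbertiMarchese15} itself produces, whenever $V(\mu,x) = \R^d$ $\mu$-a.e., families of one-rectifiable measures absolutely continuous with respect to $\mu$ whose tangent directions span $\R^d$ at $\mu$-a.e.\ point. Assembling $d$ such families into integrals of currents associated to their underlying Lipschitz curves gives normal one-dimensional currents $T_1,\ldots,T_d$ with $\mu \ll \|T_i\|$ and with $\Span\{\vec T_1(x),\ldots,\vec T_d(x)\} = \R^d$ at $\mu$-a.e.\ $x$, which is the required (ii).

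The main obstacle is the precise bridge between the language of ``normal one-dimensional currents'' and that of ``families of one-rectifiable measures dominated by $\mu$'' used to define $V(\mu,\cdot)$: in the direction (ii) $\Rightarrow$ (i) this bridge is provided by Smirnov's decomposition, whereas in the converse direction it is the explicit construction of decomposing curve families carried out in the proof of the main structural results of \cite{AlbertiMarchese15}. Once these two translations are in place, the equivalence becomes essentially tautological, which is precisely why this statement appears in \cite{AlbertiMarchese15} as a corollary rather than as a separate theorem.
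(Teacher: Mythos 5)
Your proposal takes essentially the same route as the paper: the paper's proof consists precisely of invoking both implications of \cite[Corollary~6.5]{AlbertiMarchese15} — applying its (i)\,\(\Rightarrow\)\,(ii) to a ($\mu$-measurably chosen) basis of \(V(\mu,x)\) to produce the currents \(T_i\), and its (ii)\,\(\Rightarrow\)\,(i) to get \(\vec T_i(x)\in V(\mu,x)\) \(\mu\)-a.e. — and your Smirnov-decomposition and curve-fragment discussion merely unpacks how that corollary is proved in \cite{AlbertiMarchese15}. The one step worth stating explicitly is that in (i)\,\(\Rightarrow\)\,(ii) each \(T_i\) should be obtained by applying the cited implication to a single fixed vector field \(e_i\) of a basis (possible since \(V(\mu,x)=\R^d\)), so that \(\vec T_i\) is \(\mu\)-a.e.\ parallel to \(e_i\) and the spanning condition follows.
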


\begin{proof}
The implication (i) $\Rightarrow$ (ii)  is obtained by choosing (in a measurable way) for \(\mu\)-a.e.\ \(x \in \R^d\) a  basis \(\{e_1(x),\ldots,e_d(x)\}\) of \(V(\mu,x)\) and by applying to each \(e_i\) the implication  (i) $\Rightarrow $ (ii) of~\cite[Corollary 6.5]{AlbertiMarchese15}. For  the other  implication,  simply notice that,  by  the implication (ii) $\Rightarrow$ (i) of~\cite[Corollary 6.5]{AlbertiMarchese15},  \(\vec T_i(x)\in V(\mu,x)\) for \(\mu\)-a.e.\ \(x \in \R^d\).
\end{proof}

\begin{proof}[Proof of Theorem~\ref{cor:rad}] By~\cite[Theorem 1.1]{AlbertiMarchese15} the assumptions in the statement of the theorem are equivalent to \(V(\mu,x)=\R^d\) for \(\mu\)-a.e.\ \(x \in \R^d\). This implies that \(\mu\aac \L^d\) by Lemma~\ref{equiv} and Corollary~\ref{cor:1curr}. 
\end{proof}

\begin{proof}[Proof of Theorem~\ref{cor:curr}] 
By~\cite[Theorem 1.3]{Schioppa15} the mass measure \(\|T\|\) associated with a \(d\)-dimensional metric current \(T\in \mathbf M^{\rm met}_{d}(\R^d)\)  admits  \(d\) independent Alberti representations, which, by the very definition of decomposability bundle, see~\cite[Section 2.6]{AlbertiMarchese15}, implies that \(V(\|T\|, x)=\R^d\) for \(\|T\|\)-a.e.\ \(x \in \R^d\). Theorem~\ref{cor:curr} hence  follows from Theorem~\ref{cor:rad}, see also the discussion after Theorem~1.3 in~\cite{Schioppa15}. 
\end{proof}



\providecommand{\bysame}{\leavevmode\hbox to3em{\hrulefill}\thinspace}
\providecommand{\MR}{\relax\ifhmode\unskip\space\fi MR }
\providecommand{\MRhref}[2]{%
  \href{http://www.ams.org/mathscinet-getitem?mr=#1}{#2}
}
\providecommand{\href}[2]{#2}

\end{document}